\pdfoutput=1
\documentclass[11pt]{article}

\usepackage[T1]{fontenc}
\usepackage[utf8]{inputenc}
\input{glyphtounicode}
\pdfglyphtounicode{parenleftbig}{0028}
\pdfglyphtounicode{parenrightbig}{0029}
\pdfglyphtounicode{parenleftBig}{0028}
\pdfglyphtounicode{parenrightBig}{0029}
\pdfglyphtounicode{parenleftbigg}{0028}
\pdfglyphtounicode{parenrightbigg}{0029}
\pdfglyphtounicode{parenleftBigg}{0028}
\pdfglyphtounicode{parenrightBigg}{0029}
\pdfglyphtounicode{bracketleftbig}{005B}
\pdfglyphtounicode{bracketrightbig}{005D}
\pdfglyphtounicode{braceleftbig}{007B}
\pdfglyphtounicode{bracerightbig}{007D}
\pdfglyphtounicode{slashbig}{002F}
\pdfglyphtounicode{backslashbig}{005C}
\usepackage{amsmath,amssymb,amsthm,mathtools}
\usepackage{microtype}
\usepackage[colorlinks=true,linkcolor=blue,citecolor=blue,urlcolor=blue]{hyperref}
\hypersetup{pdfauthor={Yawara Ishida},pdftitle={No involutions in the missing Moore graph}}
\title{No involutions in the missing Moore graph}

\author{Yawara Ishida\\[3pt]
  {\small\normalfont\itshape A.I.\ System Research, Inc., Kyoto, Japan}\\[2pt]
  {\small\normalfont\texorpdfstring{\href{mailto:yawara@aisr.dev}{yawara@aisr.dev}}{yawara@aisr.dev}}}
\date{}

\theoremstyle{plain}
\newtheorem{theorem}{Theorem}[section]
\newtheorem{corollary}[theorem]{Corollary}
\newtheorem{lemma}[theorem]{Lemma}
\newtheorem{proposition}[theorem]{Proposition}

\theoremstyle{remark}
\newtheorem{remark}[theorem]{Remark}

\newcommand{\Aut}{\operatorname{Aut}}
\newcommand{\Fix}{\operatorname{Fix}}
\newcommand{\tr}{\operatorname{tr}}
\newcommand{\rank}{\operatorname{rank}}
\newcommand{\Tr}{\operatorname{Tr}}
\newcommand{\Zp}{\mathbb Z_p}
\newcommand{\Qp}{\mathbb Q_p}
\newcommand{\Fp}{\mathbb F_p}
\newcommand{\Ftwo}{\mathbb F_2}
\newcommand{\one}{\mathbf 1}

\begin{document}

\maketitle

\begin{abstract}
The existence of a Moore graph of degree $57$ is the last open case in the Hoffman--Singleton classification of Moore graphs of diameter two.
We prove that such a graph, if one exists, has no involutory automorphisms; consequently its automorphism group has odd order.
The proof relies on a new trace--rank identity,
valid for every finite graph and obtained from the Brauer quotient of $p$-permutation lattices:
the trace of an automorphism of prime order $p$ on a spectral summand whose projection idempotent preserves $p$-adic permutation lattice equals
the rank, over the residue field of characteristic $p$, of the corresponding idempotent restricted to the fixed vertices.
For strongly regular graphs,
this upgrades the classical character-value congruences for prime-order automorphisms to exact rank equalities.
As further applications, the identity yields new restrictions on automorphisms of odd prime order of the hypothetical Moore graph,
and it sharpens known automorphism analyses of other hypothetical strongly regular graphs.
\end{abstract}

\section{Introduction}

Moore graphs are the extremal graphs for the degree--diameter problem: regular graphs whose order attains the Moore bound for their degree and diameter.
We refer to the surveys of Miller--\v{S}ir\'a\v{n} \cite{MillerSiran} and Dalf\'o \cite{Dalfo} for general background.
In diameter two, the theorem of Hoffman and Singleton shows that the degree of such a graph can only be
\[
 2,\quad 3,\quad 7,\quad 57;
\]
the first three cases are the pentagon, the Petersen graph, and the Hoffman--Singleton graph,
while the case of degree $57$ remains open \cite{HoffmanSingleton}.
For diameter greater than two,
Bannai--Ito \cite{BannaiIto} and Damerell \cite{Damerell} independently proved that the only Moore graphs are the odd cycles,
so the degree-$57$ graph is the unique Moore graph whose existence is undecided.
The hypothetical graph in the last case is often called the missing Moore graph.
It would be a strongly regular graph with parameters \cite{HoffmanSingleton,BrouwerHaemers}
\[
 (v,k,\lambda,\mu)=(3250,57,0,1).
\]
Recent work on the existence question includes the study of integral invariants such as the critical group \cite{Ducey},
structural constraints on the hypothetical graph \cite{Renteln,RentelnCorrigendum},
and computational search \cite{SmithMontemanni2023,SmithMontemanni2024,SmithMontemanni2026}.

Even in the absence of an existence proof, the possible automorphisms of the missing Moore graph are subject to strong restrictions.
Higman's unpublished argument, as recorded in Brouwer--Haemers and discussed in Cameron's text on permutation groups,
shows that such a graph cannot be vertex-transitive \cite{BrouwerHaemers,Cameron}.
Makhnev--Paduchikh studied the fixed-point subgraphs of automorphisms and the case of even group order \cite{MakhnevPaduchikh,MakhnevPaduchikh2009};
Ma\v{c}aj--\v{S}ir\'a\v{n} proved that the order of the automorphism group is at most $375$ if that order is odd and at most $110$ if it is even \cite{MacajSiran};
and Kov\'a\v{c}ikov\'a, in a doctoral thesis, refined the resulting restrictions for automorphisms of order $7$ \cite{Kovacikova}.
The main result of this paper eliminates the even-order case altogether.

\begin{theorem}\label{thm:main}
Let $\Gamma$ be a Moore graph of diameter two and degree $57$.
Then $\Aut(\Gamma)$ contains no element of order $2$.
\end{theorem}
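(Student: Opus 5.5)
The plan is to combine the classical combinatorial description of an involution's fixed structure with the trace--rank identity announced in the abstract (applied with $p=2$), and to show that the two resulting computations of the same trace disagree. Let $g\in\Aut(\Gamma)$ have order $2$. Let $a_0=|\Fix(g)|$, and let $a_1$ be the number of vertices $x$ with $gx$ adjacent to $x$. Since $\lambda=0$ and $\mu=1$, every non-fixed $x$ with $gx\not\sim x$ has a unique common neighbour with $gx$, and that neighbour is fixed by $g$.

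\textbf{Step 1 (fixed structure).} I would first recall the standard analysis, due to Higman and to Makhnev--Paduchikh. The fixed subgraph $\Gamma^g$ is either a star $K_{1,m}$ or a Moore graph of diameter two with smaller degree. Counting the non-fixed vertices through their unique fixed common neighbour, together with the integrality of the character values
\[
 \chi_\theta(g)=\frac{(8)a_0+a_1}{15}-\frac{57\cdot 8}{3250}\cdots,
\]
then leaves only a short list of possibilities for $(a_0,a_1)$. I expect the surviving case to be a star $K_{1,55}$, giving $a_0=56$ together with a determined value of $a_1$. From this list I would record the traces of $g$ on the eigenspaces for $7$ and $-8$, and on the sum of the eigenspaces for $1$ and $-8$, etc.

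\textbf{Step 2 (choose a $2$-adically integral idempotent).} The primitive idempotents of the Bose--Mesner algebra have denominators involving $15$ and $3250=2\cdot 5^3\cdot 13$. The factor $2$ only appears through the trivial eigenspace, because $J/3250$ is not $2$-integral. I would therefore pass to a summand whose idempotent is $2$-integral, for instance
\[
 E=\frac{A+8I}{15}-\frac{65}{3250}J \quad\text{or}\quad \frac{A+8I}{15} \text{ restricted to } \one^\perp,
\]
and adjust by an appropriate combination with $J$ so that $E$ preserves the $\mathbb Z_2$-permutation lattice. The goal of this step is to check the hypothesis of the trace--rank identity over $\mathbb Z_2$.

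\textbf{Step 3 (the contradiction).} The identity gives
\[
 \tr(g\mid E)=\rank_{\Ftwo}\bigl(\overline{E}|_{\Fix(g)}\bigr),
\]
where $\overline{E}|_{\Fix(g)}$ is the reduction mod $2$ of the principal submatrix of $E$ on the fixed vertices. For the star (or for any other surviving case), this submatrix is an explicit combination of $I$, $J$ and the adjacency matrix of $K_{1,55}$, so its $\Ftwo$-rank is easy to compute. I would then compare this rank with the trace obtained in Step 1 and aim to get a mismatch, such as a rank that is too small or of the wrong parity, which rules out each case.

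I expect the main obstacle to lie in Steps 2--3. The difficulty is to choose the summand (trivial part included or excluded) so that the idempotent is genuinely $2$-integral and the trace is pinned down exactly. If the first choice of summand yields agreement rather than a contradiction, I would switch to the complementary summand and also try the $J$-twisted variant.
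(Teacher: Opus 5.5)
Your strategy is exactly the paper's: cite $\Fix(t)\cong K_{1,55}$, apply the trace--rank identity at $p=2$ to a $2$-integral spectral summand, and compare the rank with the trace. But the proposal goes wrong at the step that carries the argument. The factor $2$ does not enter only through $J/3250$. The $7$-eigenspace idempotent is $E_7=\frac{A+8I}{15}-\frac{1}{750}J=\frac{400I+50A-J}{750}$, whose entry on non-adjacent pairs is $-1/750$, so it is \emph{not} $2$-integral. Your displayed $E$ has $J$-coefficient $65/3250=1/50$ (the division by $15$ is missing), so it is neither idempotent nor $2$-integral, and ``$\frac{A+8I}{15}$ restricted to $\one^\perp$'' is again $E_7$. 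The summand available at $p=2$ is $E_{-8}=(455I-65A+J)/975$, or its complement $I-E_{-8}=E_7+J/3250=(520I+65A-J)/975$. The latter is the $7$-eigenspace with the trivial part \emph{included}, where the two even denominators cancel. So the choice is not ``try the other summand if the first gives agreement'': with $E_7$ the hypothesis fails and the identity says nothing. Clearing the even denominator would compare rank $1$ (from $400I+50A-J\equiv J$) with trace $33$, a spurious contradiction.

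The decisive computation is also missing, and the mismatch is not of the kind you anticipate. Over $\Ftwo$, $455I-65A+J\equiv I+A+J$. On $K_{1,55}$ with the centre first this is $0\oplus(I_{55}+J_{55})$, of rank $54$ because $55$ is odd. Your displayed character formula is garbled; the correct values are $\tr(TE_{-8})=(7a_0-a_1+50)/15$ and $\tr(TE_7)=(8a_0+a_1-65)/15$. Take $a_0=56$ and the value $a_1=112$ forced by the star: each non-fixed vertex has at most one fixed neighbour, so $a_2=2+55\cdot 56$. Then $\tr(TE_{-8})=22\neq 54$. Equivalently, the identity forces $a_1=-368<0$, which is how the paper concludes. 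Parity cannot detect this, since $22$ and $54$ are both even and parity is just the classical congruence. It is the exact value that clashes; with $I-E_{-8}$ the comparison is trace $34$ against rank $2$.

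Finally, Step 1 as sketched does not isolate $K_{1,55}$. Integrality plus the count still admits, for example, $K_{1,5}$ and the Petersen and Hoffman--Singleton fixed subgraphs, so the Higman--Makhnev--Paduchikh theorem must be cited, as the paper does. Your fallback ``or any other surviving case'' would in fact dispose of these too. For $K_{1,m}$ with $m$ odd the trace is $21m/5-209$ against rank $m-1$, forcing $m=65$. For the two Moore subgraphs the trace is $-172$ and $-20$, which cannot be ranks.
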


Consequently, if the missing Moore graph exists, its automorphism group has odd order (Corollary \ref{cor:odd-order}).

The proof is based on a trace--rank identity for idempotent summands of permutation lattices.
Let $x$ be an automorphism of prime order $p$ of a finite graph $\Delta$, let $\mathcal F$ be its set of fixed vertices,
and let $O$ be a complete discrete valuation ring of residue characteristic $p$,
with field of fractions $K$, maximal ideal $\mathfrak m$, and residue field $\kappa$.
If $E$ is an idempotent endomorphism of the vertex lattice $O^{V(\Delta)}$ commuting with the action of $x$,
with image $L=EO^{V(\Delta)}$, then
\[
 \tr\bigl(x\mid K\otimes_OL\bigr)
 =\rank_{\kappa}\bigl(E[\mathcal F]\bmod\mathfrak m\bigr),
\]
where $E[\mathcal F]$ is the principal submatrix of $E$ on the fixed vertices (Corollary \ref{cor:graph}).
The identity is obtained by passing to the Brauer quotient of the $p$-permutation lattice $L$, reviewed in Section \ref{sec:padic};
in particular it applies to every spectral idempotent of the adjacency matrix that preserves the $p$-adic vertex lattice.

For strongly regular graphs the identity becomes especially concrete.
The spectral idempotents are linear combinations
\[
 E_\theta=\alpha_\theta I+\beta_\theta A+\gamma_\theta J,
\]
and whenever $E_\theta$ preserves the lattice, the identity reads
\[
 \alpha_\theta\, a_0(x)+\beta_\theta\, a_1(x)+\gamma_\theta\, v
 =\rank_{\kappa}\bigl((\alpha_\theta I_{\mathcal F}+\beta_\theta A_{\mathcal F}+\gamma_\theta J_{\mathcal F})\bmod \mathfrak m\bigr),
\]
where $v$ is the number of vertices and
\[
 a_i(x)=\left|\{u\in V(\Delta):d(u,x(u))=i\}\right|
\]
are the displacement counts of $x$, $d$ denoting the graph distance (Proposition \ref{prop:srg-rank}).
Thus the ordinary character value on the left,
whose integrality is the source of the classical congruences for $a_1(x)$ \cite{MacajSiran,DeWinterKamischkeWang},
is not merely constrained: it is equal to an explicit modular rank determined by the fixed-point subgraph (Remark \ref{rem:benson}).
Rationality of the eigenvalues is not required: if $E_\theta$ is defined only over a finite extension of $\Qp$,
the same argument is carried out over the valuation ring of that extension (Remark \ref{rem:integral}).

For the missing Moore graph the non-principal eigenvalues are $7$ and $-8$.
Section \ref{sec:spectral} computes the two corresponding spectral idempotents and determines the primes at which each preserves the $p$-adic vertex lattice;
at $p=2$, only the summand for the eigenvalue $-8$ is available.
Combining the resulting rank equality with the fixed-point theorem for involutions
--- the fixed subgraph of an involution $t$ is the star $K_{1,55}$ (Theorem \ref{thm:higman}) ---
forces a negative value of the count $a_1(t)$, a contradiction.
This proves Theorem \ref{thm:main} (Section \ref{sec:involutions}).

The same comparison yields new restrictions for automorphisms of odd prime order (Section \ref{sec:odd}).
For the primes at which the known fixed-point alternatives are precise enough, we obtain the following.

\begin{theorem}\label{thm:odd}
Let $\Gamma$ be a Moore graph of diameter two and degree $57$,
and let $x\in\Aut(\Gamma)$ be an automorphism of prime order $p\in\{7,11,13,19\}$.
Then $a_0(x)$ and $a_1(x)$ satisfy the following restrictions:
\[
\begin{array}{c|c|c}
 p & a_0(x) & a_1(x) \\
\hline
 7 & 2,\ 9,\ 16,\ 23,\ 30,\ 37 & 7a_0(x)+35 \\
 11 & 5 & 55 \\
 13 & 0 & 65 \\
 19 & 1 & 57
\end{array}
\]
\end{theorem}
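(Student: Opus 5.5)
The plan is to apply Proposition~\ref{prop:srg-rank} to both spectral idempotents, at every prime where they are lattice-preserving, and combine this with the known list of possible fixed-point subgraphs of a prime-order automorphism. For $(v,k,\lambda,\mu)=(3250,57,0,1)$ the idempotents computed in Section~\ref{sec:spectral} are
\[
E_7=\tfrac1{15}\bigl(8I+A-\tfrac1{50}J\bigr),\qquad
E_{-8}=\tfrac1{15}\bigl(7I-A+\tfrac1{65}J\bigr).
\]
The denominators involve only the primes $2,3,5,13$. Hence for $p\in\{7,11,19\}$ both summands preserve the $p$-adic vertex lattice. For $p=13$ only $E_7$ does.

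The left-hand sides of the identity are the character values
\[
\chi_7(x)=\frac{8a_0+a_1-65}{15},\qquad \chi_{-8}(x)=\frac{7a_0-a_1+50}{15}.
\]
For each $p$ we will equate one or both of them with $\rank_{\Fp}$ of the reduction of the corresponding idempotent restricted to $\mathcal F=\Fix(x)$.

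First we pin down $a_0$ and the shape of $\mathcal F$. Since non-fixed vertices lie in orbits of size $p$, we have $a_0\equiv 3250 \pmod p$. This gives $a_0\equiv 2,5,0,1$ for $p=7,11,13,19$ respectively. The same orbit argument shows $p\mid a_1$.

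Because $\lambda=0$ and $\mu=1$, the fixed subgraph is itself a graph of girth at least $5$ in which two nonadjacent fixed vertices have their unique common neighbour fixed. We invoke the known fixed-point alternatives recorded earlier (Makhnev--Paduchikh, Ma\v{c}aj--\v{S}ir\'a\v{n}). Modulo the numerology above, these leave:
\begin{itemize}
\item the empty set for $p=13$;
\item a single vertex for $p=19$;
\item a star $K_{1,4}$ or a pentagon-like small configuration with $a_0=5$ for $p=11$, to be cut down to one case;
\item for $p=7$, stars $K_{1,m}$ with $m+1\equiv 2 \pmod 7$, possibly together with a few Moore-subgraph cases such as the pentagon, Petersen or Hoffman--Singleton graph where the count allows.
\end{itemize}
For each surviving configuration we compute the principal submatrix $E[\mathcal F]\bmod p$ explicitly.

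The second step is the rank computations, case by case.
\begin{itemize}
\item For $p=13$ we have $\mathcal F=\emptyset$, so the rank is $0$. Then $\chi_7=0$ with $a_0=0$ forces $a_1=65$.
\item For $p=19$, $E_7[\mathcal F]$ is the $1\times1$ matrix $(8-1/50)/15$, a unit mod $19$, so the rank is $1$. This gives $a_1-65=15-8=7$, hence $a_1=72$? That contradicts $19\mid a_1$, so I would recheck the normalisation. Since the target is $a_1=57$, I expect the correct constraint comes from $E_{-8}$: its rank is $1$ if $(7+1/65)/15$ is a unit mod $19$. Then $7-a_1+50=15$ gives $a_1=42$; this also fails, so the arithmetic will need care.
\item For $p=11$ and $p=7$ we compute the rank of $\alpha I+\beta A_{\mathcal F}+\gamma J$ on a star or on a small Moore graph modulo $p$. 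For a star $K_{1,m}$ this is a rank-one perturbation of $\alpha I+\beta A$, whose spectrum is $\alpha$ and $\alpha\pm\beta\sqrt m$. Its rank mod $p$ is therefore $a_0$ minus the number of these values (and of the $J$-correction) that vanish mod $p$, which can be read off from the residues of $m$.
\end{itemize}
Equating each rank with the corresponding $\chi$ gives linear equations in $a_1$. Two equations are available at $p=7,11,19$, and we use them both to derive $a_1$ and to eliminate configurations that give inconsistent or non-integral values, negative values, or values not divisible by $p$. This elimination is what leaves $a_0=5$ for $p=11$, and the list $2,9,\dots,37$ with $a_1=7a_0+35$ for $p=7$.

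The main obstacle is the $p=7$ case. There the fixed subgraph can be any of several stars or Moore subgraphs, and the answer depends on exactly how many eigenvalues of the restricted idempotent vanish mod $7$. This requires a careful treatment of the $J$-term, since $1/50\equiv1$ and $1/65\equiv 4 \pmod 7$ couple to the all-ones vector of $\mathcal F$. I would handle it by working in the basis {centre, sum of leaves, leaf differences}, on which the matrix is block-diagonal: a $2\times2$ block plus a scalar block. From this the rank, and hence $a_1$, becomes an affine function of $a_0$, which should match $7a_0+35$. The upper bound $a_0\le 37$ should then follow from $a_1\le 3250-a_0$, or from the exclusion of larger fixed subgraphs by the known alternatives.
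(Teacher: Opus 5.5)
Your overall strategy is the paper's: use Lemma~\ref{lem:fix-odd} to fix $\Fix(x)$, then apply Proposition~\ref{prop:srg-rank}. Your centre/leaf-sum/leaf-difference basis does give the $p=7$ rank: modulo $7$ one has $E_{-8}[\mathcal F]\equiv -A_{\mathcal F}+4J_{\mathcal F}=4uu^{\mathsf T}$ with $u=(1,-1,\dots,-1)^{\mathsf T}$, of rank $1$, whence $a_1(x)=7a_0(x)+35$.

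\textbf{Main gap: the bound $a_0(x)\le 37$.} Neither route you suggest yields it. The inequality $7a_0+35\le 3250-a_0$ gives only $a_0\le 401$. Lemma~\ref{lem:fix-odd} puts no bound on $\ell$; the degree gives only $a_0\le 58$. And the rank identity is perfectly consistent at $a_0\in\{44,51\}$.

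The missing idea is a counting argument that uses the exact value of $a_1$ for three different powers of $x$.
\begin{enumerate}
\item[(a)] Since $x$, $x^2$, $x^3$ have the same fixed star, each set $S_k=\{v: v\sim x^k(v)\}$, $k=1,2,3$, has size $7a_0+35$.
\item[(b)] Triangle-freeness makes $S_1,S_2,S_3$ pairwise disjoint: a common element produces a triangle among the $x^i(v)$, using $S_k=S_{7-k}$. The same argument makes each $S_k$ disjoint from $\mathcal F$ and from the neighbourhood of $\mathcal F$.
\item[(c)] Two leaves have the fixed centre as their unique common neighbour, so the sets $N(v_i)\setminus\mathcal F$ over the $a_0-1$ leaves are pairwise disjoint, each of size $56$.
\end{enumerate}
Summing gives $3(7a_0+35)+a_0+56(a_0-1)\le 3250$, i.e.\ $a_0\le 41$. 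Together with $a_0\equiv 2\pmod 7$ this gives exactly $\{2,9,16,23,30,37\}$. All three powers are needed: with one power $44$ and $51$ survive, and with two powers $44$ survives. Comparing $a_1=7a_0+35$ with the bounds tabulated in Ma\v{c}aj--\v{S}ir\'a\v{n}'s Lemma~12 would also remove $44$ and $51$, but you do not invoke that either.

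\textbf{The case $p=19$ is an arithmetic error, not a normalisation issue.} We have $8-\tfrac1{50}=\tfrac{399}{50}$ and $7+\tfrac1{65}=\tfrac{456}{65}$, with $399=19\cdot 21$ and $456=19\cdot 24$. So both $1\times 1$ principal submatrices vanish modulo $19$ and both ranks are $0$; the two ranks must in any case sum to $a_0-1=0$. Either equation then gives $a_1=57$.

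\textbf{The case $p=11$ is missing its computation.} Lemma~\ref{lem:fix-odd} already forces $\Fix(x)$ to be a pentagon. Stars occur only at $p=7$, so there is no $K_{1,4}$ to eliminate; likewise there are no pentagon, Petersen or Hoffman--Singleton cases at $p=7$. Your plan only treats stars and never computes the pentagon rank, which is required. With $\sqrt5=4$ in $\mathbb F_{11}$, the pentagon adjacency matrix $B$ has eigenvalues $2,7,7,3,3$. Hence $455I-65B+J\equiv 4I+B+J$ has eigenvalues $0,0,0,7,7$ and rank $2$, and the first equation gives $a_1=55$.
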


For $p=7$,
Theorem \ref{thm:odd} recovers a theorem of Kov\'a\v{c}ikov\'a \cite[Theorem 5.1]{Kovacikova} whose proof rests on computer-assisted counts of induced subgraphs;
the derivation given here uses neither the computer search nor the tabulated bounds of \cite[Lemma 12]{MacajSiran}
(see Section \ref{subsec:order7} for a detailed comparison).
For $p=11$, $13$, and $19$,
Theorem \ref{thm:odd} selects the smallest of the values of $a_1(x)$ allowed by the table of Ma\v{c}aj--\v{S}ir\'a\v{n} \cite[Lemma 12]{MacajSiran},
which appears to be new.
The remaining odd primes that can occur as orders of automorphisms of $\Gamma$ are $3$ and $5$ (Lemma \ref{lem:fix-odd}): for these,
neither spectral idempotent of $\Gamma$ is defined over the $p$-adic integers,
and the method of this paper gives nothing.

The trace--rank identity is independent of the Moore graph parameters.
Beyond the missing Moore graph,
Section \ref{sec:applications} applies it to sharpen known automorphism analyses of five hypothetical strongly regular graphs,
replacing character congruences by exact displacement counts.

\section{Known facts on the missing Moore graph}\label{sec:known}

Throughout the paper, $\Gamma$ denotes a Moore graph of diameter two and degree $57$, and $V(\Gamma)$ denotes its vertex set.
The graph is finite, simple, undirected, and regular.

\subsection{Parameters and spectrum}

The Moore bound gives
\[
 |V(\Gamma)|=1+57^2=3250,
\]
and the diameter-two Moore graph condition implies that $\Gamma$ is strongly regular.
A strongly regular graph with parameters $(v,k,\lambda,\mu)$ is a $k$-regular graph on $v$ vertices in which any two
adjacent vertices have exactly $\lambda$ common neighbours and any two distinct non-adjacent vertices have exactly $\mu$ common neighbours.
For $\Gamma$ these parameters are \cite{HoffmanSingleton,BrouwerHaemers}
\[
(v,k,\lambda,\mu)=(3250,57,0,1);
\]
here $\lambda=0$ because $\Gamma$ is triangle-free,
and $\mu=1$ because any two non-adjacent vertices have a unique common neighbour.
Let $A$ be its adjacency matrix and let $J$ be the all-one matrix.  Then
\begin{equation}\label{eq:srg-relation}
 A^2+A=56I+J.
\end{equation}
Consequently the spectrum of $A$ is
\begin{equation}\label{eq:spectrum}
 \operatorname{Spec}(A)=\{57^1,\ 7^{1729},\ (-8)^{1520}\}.
\end{equation}
This spectral calculation is standard for Moore graphs and strongly regular graphs with these parameters \cite{HoffmanSingleton,BrouwerHaemers}.

\subsection{Fixed-point structure}\label{subsec:fix}

The fixed-point calculation for involutions is part of Higman's unpublished argument and was further used by Makhnev--Paduchikh in their study of $\Gamma$.
For an automorphism $g$, write $\Fix(g)$ for the subgraph induced by the vertices fixed by $g$.
We need the following precise form.

\begin{theorem}[Higman--Makhnev--Paduchikh]\label{thm:higman}
Let $t\in\Aut(\Gamma)$ be an involution.  Then
\[
 \Fix(t)\cong K_{1,55}.
\]
In particular, $t$ fixes exactly $56$ vertices.
\end{theorem}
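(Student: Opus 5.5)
The plan is to prove Theorem~\ref{thm:higman} in two stages: first describe the general shape of $\Fix(t)$ using only the Moore property ($\lambda=0$, $\mu=1$, girth $5$), then count fixed vertices by combining the character-value (trace) integrality for $t$ on the eigenspaces with the displacement counts $a_0(t),a_1(t),a_2(t)$.

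\emph{Structure of $\Fix(t)$.} Let $F=\Fix(t)$. If $u,w\in F$ are non-adjacent, their unique common neighbour is fixed, so $F$ is again a ``partial'' Moore-type graph: it has diameter at most two or is disconnected only in a controlled way. For a non-fixed vertex $u$ with $t(u)\neq u$, either $u\sim t(u)$ or $u,t(u)$ have a unique common neighbour, which must be fixed. The $t$-invariance of neighbourhoods gives, for each fixed vertex $w$, that $t$ permutes the $57$ neighbours of $w$, so the number of fixed neighbours of $w$ is odd, $57-2m$. I will use the standard classification of fixed structures for automorphisms of Moore graphs of diameter two, which is elementary: $F$ is either empty, a single vertex, a star $K_{1,m}$, or a Moore graph of smaller degree (pentagon, Petersen, Hoffman--Singleton) where every fixed vertex has the same fixed degree, or the whole graph. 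For an involution the last case is excluded. The Moore-subgraph cases are excluded by a parity/counting argument: in a Moore subgraph every fixed vertex has fixed degree $s\in\{2,3,7\}$, and $57-s$ must be even, which rules out $s=2$ and leaves $s=3,7$ to be killed by the count below. The empty case is ruled out by the same parity argument, because an involution on $3250$ (even) vertices with no fixed vertex and $57$ odd requires an edge $u\sim t(u)$ for every vertex. The star case $K_{1,m}$ then has a centre with $m$ odd fixed neighbours.

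\emph{Counting.} Since $t$ is an involution, every vertex is fixed or lies in a $2$-cycle, so $a_0+a_1+a_2=3250$. By \eqref{eq:srg-relation}, the trace of $t$ on the $7$-eigenspace is $\tfrac{1}{15}\bigl(8a_0+a_1\bigr)-\tfrac{\cdots}{}$, a linear expression in $a_0$, $a_1$ and $v$, obtained from $E_7=\alpha I+\beta A+\gamma J$. It must be an integer, and it must be congruent to $\dim\equiv 1729 \pmod 2$ with absolute value at most $1729$. Next I count $a_1$ directly. Each non-fixed vertex $u$ with $u\not\sim t(u)$ has a fixed common neighbour $w$ with $u$ and $t(u)$ among the $57-(\text{fixed degree of }w)$ moved neighbours of $w$. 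Double counting pairs (fixed $w$, moved neighbour $u$) gives $a_2=\sum_{w\in F}(57-\deg_F w)$. Then $a_1=3250-a_0-a_2$.

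For $K_{1,m}$ we get $a_0=m+1$ and $a_2=(57-m)+56m$, so $a_1=3250-(m+1)-57-55m$. Substituting into the integrality congruence (denominator $15$) should force $m\equiv 55$ modulo something, and the positivity of $a_1$ together with $m\le 57$ should pin down $m=55$. The Petersen and Hoffman--Singleton cases are treated the same way and should fail integrality. The main obstacle is this last arithmetic step, namely showing that the congruence from the $E_7$ (or $E_{-8}$) trace, which has denominator $15$, isolates $m=55$ uniquely rather than leaving a residue class. If it does not, I would add a second constraint. The first candidate is the bound that the trace on the $-8$-eigenspace lies within $\pm1520$. The second is a local count of edges $u\sim t(u)$ through the neighbourhood of the star centre.
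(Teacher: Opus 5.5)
\textbf{There is a genuine gap.} Your structural stage is sound, with one exception: the empty case is not a parity contradiction. A fixed-point-free involution simply has $a_1(t)=3250$. That is absurd only because $\chi_7(t)=(8a_0+a_1-65)/15=3185/15\notin\mathbb Z$.

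The real gap is the counting stage, which cannot reach $K_{1,55}$.
\begin{itemize}
\item \emph{Stars.} With your (correct) counts, a star $K_{1,m}$ gives $a_0=m+1$, $a_1=56(57-m)$ and $\chi_7(t)=209-16m/5$. Integrality therefore yields only $5\mid m$. With $m$ odd this leaves $m\in\{5,15,25,35,45,55\}$, every one with $a_1\ge 112>0$. The parity conditions $\chi_7(t)\equiv 1729$ and $\chi_{-8}(t)=a_0-1-\chi_7(t)\equiv 1520\pmod 2$ hold automatically for all six.
\item \emph{Moore subgraphs.} These do not fail integrality either. The Petersen case has $(a_0,a_1)=(10,2700)$, $\chi_7(t)=181$, $\chi_{-8}(t)=-172$. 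The Hoffman--Singleton case has $(a_0,a_1)=(50,700)$, $\chi_7(t)=69$, $\chi_{-8}(t)=-20$.
\item \emph{Your fallbacks.} All these values lie well inside the bounds $1729$ and $1520$, so the eigenspace-dimension bound gives nothing. Counting edges $u\sim t(u)$ around the star centre only re-derives $a_1=56(57-m)$.
\end{itemize}
So your constraints leave eight configurations, and your claims that positivity pins down $m=55$ and that the Petersen and Hoffman--Singleton cases fail integrality are both false.

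\textbf{The missing idea} is a count around a \emph{flipped edge} rather than around the star centre. Suppose $z\sim t(z)$.
\begin{itemize}
\item No vertex of $\{z,t(z)\}\cup N(z)\cup N(t(z))$ is fixed: a fixed neighbour of $z$ would be a common neighbour of the adjacent vertices $z$ and $t(z)$.
\item Each of the remaining $3250-114=56^2$ vertices is the unique common neighbour of exactly one pair $(y,y')$ with $y\in N(z)\setminus\{t(z)\}$ and $y'\in N(t(z))\setminus\{z\}$. Such $y,y'$ are never adjacent, since that would create a quadrangle through the edge $z\,t(z)$.
\item The involution $t$ sends the vertex attached to $(y,y')$ to the vertex attached to $(t(y'),t(y))$. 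So that vertex is fixed if and only if $y'=t(y)$.
\end{itemize}
Hence $a_1>0$ forces $a_0=56$ exactly.

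It remains to rule out $a_1=0$. Then $a_0+a_2=3250$. This fails for the empty graph, a single vertex, the Petersen graph ($550$) and the Hoffman--Singleton graph ($2550$). For a star it forces $m=57$, which is excluded since $5\nmid 57$. So $a_1>0$, $a_0=56$, and $K_{1,55}$ is the only fixed-subgraph candidate on $56$ vertices.

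As far as I recall, this flipped-edge count is the step used in Higman's argument as presented in Cameron's account. The paper cites that account and does not reprove the theorem.
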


The assertion that an involution fixes exactly $56$ vertices, and that its fixed vertices form a star,
is proved in Cameron's account of Higman's argument \cite[proof of Theorem 3.13]{Cameron};
the count is also contained in the proof of Brouwer--Haemers \cite[Proposition 11.5.2]{BrouwerHaemers}.
The star statement is recorded explicitly by Makhnev--Paduchikh \cite[Lemma 4]{MakhnevPaduchikh},
who attribute the result to Higman and cite Cameron's account for its proof;
it recurs in the fixed-point alternative of their later paper \cite[Proposition 1]{MakhnevPaduchikh2009}.
We use only this fixed-point statement and not any classification of possible automorphism groups.

For automorphisms of odd prime order we use the following alternative,
due to Makhnev--Paduchikh \cite[Lemma 3]{MakhnevPaduchikh} and stated in the form below as Lemma 4 of Ma\v{c}aj--\v{S}ir\'a\v{n} \cite{MacajSiran},
here phrased for a single automorphism.
In \cite{MacajSiran} each case carries a divisibility condition on the order of the group,
which for a group of prime order $p$ amounts to the memberships stated below;
the underlying six-fold alternative for fixed-point subgraphs goes back to Aschbacher's analysis \cite{Aschbacher}.

\begin{lemma}[Makhnev--Paduchikh; Ma\v{c}aj--\v{S}ir\'a\v{n}]\label{lem:fix-odd}
Let $x\in\Aut(\Gamma)$ have odd prime order $p$.  Then one of the following holds:
\begin{enumerate}
\item[(1)] $\Fix(x)$ is empty and $p\in\{5,13\}$;
\item[(2)] $\Fix(x)$ is a single vertex and $p\in\{3,19\}$;
\item[(3)] $\Fix(x)$ is a star $K_{1,1+7\ell}$ with $\ell\ge 0$ and $p=7$;
\item[(4)] $\Fix(x)$ is a pentagon and $p\in\{5,11\}$;
\item[(5)] $\Fix(x)$ is the Petersen graph and $p=3$;
\item[(6)] $\Fix(x)$ is the Hoffman--Singleton graph and $p=5$.
\end{enumerate}
\end{lemma}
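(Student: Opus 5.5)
The plan is to first classify the possible fixed subgraphs $F=\Fix(x)$ combinatorially, and then use orbit counting modulo $p$ to decide which primes can occur with each shape. No character theory should be needed.

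\emph{Shape of $F$.} Since $x$ is a graph automorphism, $F$ is an induced subgraph of $\Gamma$, so it is triangle-free. Take two distinct non-adjacent fixed vertices $u,w$. Their unique common neighbour $z$ in $\Gamma$ is mapped by $x$ to a common neighbour of $x(u)=u$ and $x(w)=w$, so $x(z)=z$ by uniqueness. Hence any two non-adjacent vertices of $F$ have exactly one common neighbour in $F$. In particular, if $F$ has at least two vertices, then $F$ is connected of diameter at most $2$, triangle-free, and contains no $4$-cycle. The classification then splits as follows.
\begin{itemize}
\item If $F$ has $0$ or $1$ vertices, we are in case (1) or (2).
\item If $F$ has at least two vertices and is acyclic, it is a tree of diameter at most $2$, hence a star $K_{1,m}$ (this includes $K_2=K_{1,1}$).
\item If $F$ has at least two vertices and contains a cycle, it has girth $5$ and diameter $2$.
\end{itemize}
In the last case I would invoke the standard Hoffman--Singleton argument. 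Two non-adjacent vertices have equal degree, because the unique-common-neighbour property gives a bijection between their neighbourhoods. Since $F$ is not a star or a complete bipartite graph, the complement of $F$ is connected, so $F$ is regular. Thus $F$ is a Moore graph of diameter $2$, i.e.\ a pentagon, the Petersen graph, the Hoffman--Singleton graph, or a graph of degree $57$. Degree $57$ is excluded: then $|F|=3250$, so $F=\Gamma$ and $x$ would be trivial.

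\emph{Divisibility.} Since $p$ is prime, every non-trivial $\langle x\rangle$-orbit has length $p$. This gives two conditions:
\begin{itemize}
\item $p\mid 3250-|F|$, counting the non-fixed vertices;
\item $p\mid 57-\deg_F(u)$ for each $u\in F$, since the non-fixed neighbours of a fixed vertex are permuted in $p$-orbits.
\end{itemize}
Applying these to each shape, with $p$ odd:
\begin{itemize}
\item Empty $F$: $3250=2\cdot5^3\cdot13$, so $p\in\{5,13\}$.
\item Single vertex: $3249=3^2\cdot19^2$ and $57=3\cdot19$, so $p\in\{3,19\}$.
\item Star $K_{1,m}$: each leaf has $56$ non-fixed neighbours, so $p\mid 56$ and $p=7$. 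The centre then gives $m\equiv57\equiv1\pmod 7$, i.e.\ $m=1+7\ell$ with $\ell\ge0$. The vertex count $3248-7\ell=7(464-\ell)$ is consistent.
\item Pentagon: $p\mid 55$ and $p\mid3245=5\cdot11\cdot59$, so $p\in\{5,11\}$.
\item Petersen graph: $p\mid54$ and $p\mid3240$, so $p=3$.
\item Hoffman--Singleton graph: $p\mid50$ and $p\mid3200$, so $p=5$.
\end{itemize}

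The main obstacle is the regularity step in the girth-$5$ case, which turns "diameter $2$, girth $5$" into "Moore graph". I would take this from the classical Hoffman--Singleton argument rather than reprove it, being careful that the degenerate cases (stars) have already been separated off. Everything else is bookkeeping with the factorisations above.
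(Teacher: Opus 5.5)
Your proof is correct. It differs from the paper in that the paper gives no argument for this lemma at all. It imports the result from Makhnev--Paduchikh and Ma\v{c}aj--\v{S}ir\'a\v{n}, where it is stated for a group of automorphisms with divisibility conditions on the group order; for a group of prime order these reduce to the listed primes, and the six shapes of the fixed subgraph go back to Aschbacher.

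You instead derive the lemma from scratch in two independent steps. First, the unique common neighbour of two fixed non-adjacent vertices is itself fixed. So $\Fix(x)$ inherits $\lambda=0$, $\mu=1$, and is therefore empty, a vertex, a star, or a regular graph of diameter two and girth five, i.e.\ a Moore graph; degree $57$ is excluded because $x\neq 1$. Second, counting $p$-orbits on all of $V(\Gamma)$ and on the neighbourhood of each fixed vertex pins down $p$. The local condition $p\mid 57-\deg_{\Fix(x)}(u)$ is essential here. For stars it alone isolates $p=7$ and gives $m\equiv 1\pmod 7$, and it also removes primes the global count allows, such as $59$ for the pentagon and $5$ for the Petersen graph.

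What your route buys is a self-contained proof with explicit dependencies. Nothing beyond the Hoffman--Singleton theorem and the uniqueness of the Petersen and Hoffman--Singleton graphs is used, and no character theory enters. The citation route keeps the paper short and aligned with the group-order formulation in the literature.

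The one step you state too tersely is the connectedness of the complement in the girth-five case. Spell it out: a disconnected complement makes $\Fix(x)$ a join, which triangle-freeness forces to be complete bipartite, and a complete bipartite graph containing a cycle contains a $4$-cycle.
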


We also record the elementary properties of the displacement counts $a_i(x)$ defined in the introduction: they are non-negative integers,
$a_0(x)$ is the number of vertices of $\Fix(x)$, and
\[
 a_0(x)+a_1(x)+a_2(x)=3250.
\]
Our notation agrees with that of \cite{MacajSiran} and \cite{Kovacikova}.

\section{\texorpdfstring{A trace formula for $p$-permutation lattices}{A trace formula for p-permutation lattices}}\label{sec:padic}

We record the module-theoretic terminology used below.
Let $O$ be a complete discrete valuation ring with maximal ideal $\mathfrak m$,
residue field $\kappa=O/\mathfrak m$ of characteristic $p>0$, and field of fractions $K$ of characteristic $0$,
and let $G$ be a finite group.
An $OG$-module is a module over the group algebra $OG$.
An $OG$-lattice is an $OG$-module which is finitely generated and free as an $O$-module.
A permutation $OG$-lattice is an $OG$-lattice admitting an $O$-basis permuted by $G$.

An $OG$-lattice is called $p$-permutation if its restriction to a Sylow $p$-subgroup of $G$ is a permutation lattice.
Equivalently, it admits an $O$-basis permuted by a Sylow $p$-subgroup.
We shall use the standard characterization that direct summands of permutation $OG$-lattices are $p$-permutation;
see Lassueur \cite[Proposition--Definition 4.2]{Lassueur}.
The Brauer quotient and its basis description for permutation modules are used in the form given by Brou\'e and by Bouc--Th\'evenaz \cite{Broue,BoucThevenaz}.

We use this material for
\[
 G=C_p=\langle x\rangle,\qquad x^p=1,
\]
cyclic of prime order equal to the residue characteristic $p$ of $O$.
In this case, a $p$-permutation $OC_p$-lattice is simply an $OC_p$-lattice admitting an $O$-basis permuted by $C_p$,
because $C_p$ is its own Sylow $p$-subgroup.
For an $OC_p$-lattice $N$, put
\[
 \overline N=\kappa\otimes_{O}N\cong N/\mathfrak mN.
\]
We write
\begin{equation}\label{eq:brauer-quotient}
 N(C_p)=\overline N^{C_p}\big/\Tr_1^{C_p}(\overline N),
\end{equation}
where
\[
 \Tr_1^{C_p}(v)=v+x(v)+\dots+x^{p-1}(v)
\]
is the relative trace from the trivial subgroup to $C_p$.
Thus $N(C_p)$ is the usual Brauer quotient of the $\kappa C_p$-module $\overline N$ at the subgroup $C_p$,
written so as to record the lattice $N$ from which it comes.

The following observation shows that, for a lattice with a basis permuted by $C_p$,
the ordinary trace in characteristic zero and the Brauer quotient in characteristic $p$ count the same fixed basis elements.

\begin{lemma}\label{lem:trace-brauer}
Let $N$ be a $p$-permutation $OC_p$-lattice.  Then
\[
 \tr\left(x\mid K\otimes_{O}N\right)=\dim_{\kappa}N(C_p),
\]
where the integer on the right is regarded as an element of $K$.
\end{lemma}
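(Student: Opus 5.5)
Since $N$ is $p$-permutation and $C_p$ is its own Sylow $p$-subgroup, we fix an $O$-basis $B$ of $N$ that is permuted by $x$. Both sides of the identity are then computed in terms of $B$, and each is shown to equal $|B^{C_p}|$, the number of basis elements fixed by $x$.

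\textbf{The left-hand side.} The basis $B$ is also a $K$-basis of $K\otimes_O N$. In this basis, $x$ acts by a permutation matrix. The trace of a permutation matrix is the number of fixed points of the permutation, so $\tr(x\mid K\otimes_O N)=|B^{C_p}|$.

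\textbf{The Brauer quotient.} Since $x$ has prime order $p$, every $C_p$-orbit on $B$ has size $1$ or $p$. Write $B=B_0\sqcup B_1$, where $B_0=B^{C_p}$ and $B_1$ is the union of the orbits of size $p$. The images $\bar b$ of the elements of $B$ form a $\kappa$-basis of $\overline N$, and $C_p$ permutes them. So $\overline N$ decomposes as a direct sum of $\kappa C_p$-modules
\[
\overline N=\bigoplus_{b\in B_0}\kappa\bar b\ \oplus\ \bigoplus_{\text{orbits }\omega\subset B_1}\kappa\omega .
\]
The fixed-point functor and $\Tr_1^{C_p}$ are both additive, so the Brauer quotient can be computed summand by summand.

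\emph{Trivial summand $\kappa\bar b$.} This summand is entirely fixed, so its fixed space is $\kappa\bar b$. The relative trace acts as multiplication by $p$, which is $0$ in $\kappa$. The contribution is therefore one-dimensional.

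\emph{Regular summand $\kappa\omega$.} This summand is isomorphic to $\kappa C_p$. Its fixed space is spanned by the orbit sum $\sum_{b\in\omega}\bar b$. This orbit sum is $\Tr_1^{C_p}(\bar b)$ for any $b\in\omega$. The contribution is therefore zero.

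\textbf{Conclusion.} Adding the contributions gives $\dim_\kappa N(C_p)=|B_0|=|B^{C_p}|$, which equals the trace computed above. The right-hand side is a non-negative integer viewed in $K$; since $K$ has characteristic $0$, this identification is harmless.

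I do not expect a real obstacle. The only point needing care is that the Brauer quotient is computed on $\overline N$ rather than on $N$, and this is why $\Tr_1^{C_p}$ kills the trivial summands: multiplication by $p$ is zero in $\kappa$ but not in $O$. The argument also uses, implicitly, that $N(C_p)$ does not depend on the choice of permuted basis. This holds because $N(C_p)$ is defined intrinsically in \eqref{eq:brauer-quotient}, and the basis is used only to compute it.
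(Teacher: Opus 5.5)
Your proof is correct and follows essentially the same route as the paper: fix an $O$-basis permuted by $C_p$, then check that both the ordinary trace and $\dim_\kappa N(C_p)$ count the fixed basis elements, because free orbits contribute $0$ to the trace and their orbit sums are relative traces. Your orbit-by-orbit decomposition of $\overline N$, with the observation that $\Tr_1^{C_p}$ acts on a trivial summand as multiplication by $p=0$ in $\kappa$, spells out the step the paper takes from the standard basis description of the Brauer quotient in Bouc--Th\'evenaz.
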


\begin{proof}
Since $N$ is $p$-permutation and $C_p$ is its own Sylow $p$-subgroup, $N$ has an $O$-basis $\mathcal X$ permuted by $C_p$.
As $C_p$ has prime order, the $C_p$-orbits on $\mathcal X$ have size one or $p$.

A fixed basis element contributes $1$ to the trace of $x$ on $K\otimes_{O}N$.
An orbit of size $p$, on which $x$ acts as a $p$-cycle, contributes the trace of the permutation matrix of a $p$-cycle,
which is $0$.
Hence the left-hand side counts the fixed elements of $\mathcal X$.

After reduction modulo $\mathfrak m$,
the image $\overline{\mathcal X}$ of $\mathcal X$ is a $\kappa$-basis of $\overline N$ permuted by $C_p$.
If $\{b,x(b),\dots,x^{p-1}(b)\}$ is an orbit of size $p$ in $\mathcal X$,
then the invariant vectors supported on it are spanned by
\[
 \overline b+x(\overline b)+\dots+x^{p-1}(\overline b)=\Tr_1^{C_p}(\overline b),
\]
which vanishes in $N(C_p)$.
If $b\in\mathcal X$ is fixed by $C_p$, its image survives in the quotient.
The images of the fixed elements of $\mathcal X$ therefore form a basis of $N(C_p)$,
as in the standard basis description of the Brauer quotient for permutation modules \cite[Proposition 2.5(4)]{BoucThevenaz}.
Hence the right-hand side also counts the fixed elements of $\mathcal X$.
\end{proof}

\section{The trace--rank identity}\label{sec:general}

Throughout this section, $O$ is a complete discrete valuation ring as in Section \ref{sec:padic},
with maximal ideal $\mathfrak m$, residue field $\kappa$ of characteristic $p>0$,
and field of fractions $K$ of characteristic $0$.

\subsection{Idempotent summands of permutation lattices}

Let $X$ be a finite set, let $x$ be a permutation of $X$ of order $p$, and let
\[
 M=O^X
\]
be the corresponding permutation $OC_p$-lattice, where $C_p=\langle x\rangle$, and write $\overline M=\kappa\otimes_OM$.
Let $\mathcal F\subseteq X$ be the set of fixed points of $x$.

The Brauer quotient of $M$ has a basis indexed by the fixed points.
Indeed, if $w\in X$ is fixed by $x$,
then the image $\overline e_w$ of the standard basis vector is $C_p$-invariant and survives in $M(C_p)$.
If $w$ is not fixed, its $C_p$-orbit has size $p$, and the invariant vectors supported on this orbit are spanned by the orbit sum
\[
 \overline e_w+\overline e_{x(w)}+\dots+\overline e_{x^{p-1}(w)}=\Tr_1^{C_p}(\overline e_w),
\]
which is zero in the Brauer quotient.  Hence
\begin{equation}\label{eq:MC-basis}
 M(C_p)\cong \kappa^{\mathcal F}.
\end{equation}

\begin{theorem}\label{thm:local}
In the situation above, let $E\in\operatorname{End}_O(M)$ be an idempotent commuting with the action of $C_p$,
set $L=EM$,
and let $E[\mathcal F]$ denote the principal submatrix of the matrix of $E$ in the standard basis of $M$ on the rows and columns indexed by $\mathcal F$.
Then
\[
 \tr\bigl(x\mid K\otimes_O L\bigr)=\rank_{\kappa}\bigl(E[\mathcal F]\bmod \mathfrak m\bigr),
\]
the integer on the right being regarded as an element of $K$.
\end{theorem}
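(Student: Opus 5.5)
The plan is to compute both sides through the Brauer quotient and compare them using functoriality. First, $L=EM$ is an $OC_p$-direct summand of $M$, because $E$ is idempotent and commutes with $C_p$, so that $M=EM\oplus(1-E)M$ as $OC_p$-lattices. By the cited characterization (Lassueur), $L$ is therefore a $p$-permutation $OC_p$-lattice. Lemma \ref{lem:trace-brauer} then gives
\[
 \tr\bigl(x\mid K\otimes_O L\bigr)=\dim_\kappa L(C_p).
\]
It therefore suffices to prove $\dim_\kappa L(C_p)=\rank_\kappa\bigl(E[\mathcal F]\bmod\mathfrak m\bigr)$.

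Second, I will use that the Brauer quotient $N\mapsto N(C_p)$ is an additive functor on $OC_p$-lattices. A $C_p$-equivariant map $f\colon N\to N'$ reduces to $\overline f\colon\overline N\to\overline N'$. This map sends invariants to invariants, and since $\overline f\circ\Tr_1^{C_p}=\Tr_1^{C_p}\circ\overline f$, it also sends traces to traces. Hence $\overline f$ induces a map $f(C_p)\colon N(C_p)\to N'(C_p)$. Applying the functor to $E$ gives an idempotent $E(C_p)$ on $M(C_p)$, and the decomposition $M=L\oplus(1-E)M$ yields $M(C_p)=L(C_p)\oplus((1-E)M)(C_p)$, with $E(C_p)$ acting as the projection onto the first summand. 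Consequently
\[
 \dim_\kappa L(C_p)=\rank_\kappa E(C_p)=\dim_\kappa E(C_p)M(C_p),
\]
where the first equality holds because the rank of an idempotent is the dimension of its image.

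Third, I will identify $E(C_p)$ with $E[\mathcal F]\bmod\mathfrak m$ using the basis \eqref{eq:MC-basis}. Take a fixed vertex $w\in\mathcal F$. Then $\overline E\,\overline e_w=\sum_{u\in X}\overline E_{uw}\overline e_u$, and this vector is $C_p$-invariant. Group the terms by $C_p$-orbits. Equivariance of $E$ gives $E_{x(u),w}=E_{u,x(w)}=E_{u,w}$, so the coefficients are constant on each orbit. The part of the sum supported on an orbit of size $p$ is therefore a scalar multiple of the orbit sum, which is $\Tr_1^{C_p}(\overline e_u)$ and vanishes in $M(C_p)$. What remains in the Brauer quotient is $\sum_{u\in\mathcal F}\overline E_{uw}\,\overline e_u$. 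So the matrix of $E(C_p)$ in the basis $\{\overline e_w\}_{w\in\mathcal F}$ is exactly $E[\mathcal F]\bmod\mathfrak m$, and the ranks agree, which proves the theorem.

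The step needing the most care is the second one: checking that the Brauer quotient is well-defined on morphisms and respects the direct sum. Additivity is immediate, since both invariants and traces split across a direct sum of $C_p$-modules. The remaining steps are routine bookkeeping.
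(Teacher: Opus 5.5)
Your proposal is correct and follows the paper's proof essentially step for step: Lassueur's characterization plus Lemma~\ref{lem:trace-brauer} reduce the trace to $\dim_\kappa L(C_p)$, additivity of the Brauer quotient identifies $L(C_p)$ with the image of the induced idempotent $E(C_p)$, and the orbit-sum argument identifies $E(C_p)$ with $E[\mathcal F]\bmod\mathfrak m$. One cosmetic slip: equivariance actually reads $E_{x(a),x(b)}=E_{a,b}$, so your middle term should be $E_{u,x^{-1}(w)}$ rather than $E_{u,x(w)}$, but both equal $E_{u,w}$ because $w$ is fixed.
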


\begin{proof}
Since $M$ is free over the discrete valuation ring $O$, the image $L=EM$ of the idempotent $E$ is an $OC_p$-lattice,
and it is a direct summand of the permutation $OC_p$-lattice $M$.
By the characterization of $p$-permutation lattices recalled in Section \ref{sec:padic},
$L$ is therefore a $p$-permutation $OC_p$-lattice, and Lemma \ref{lem:trace-brauer} gives
\[
 \tr\bigl(x\mid K\otimes_O L\bigr)=\dim_\kappa L(C_p).
\]

It remains to compute this dimension.
The reduction $\overline E$ maps $C_p$-fixed vectors to $C_p$-fixed vectors and relative traces to relative traces,
because $E$ commutes with the action of $C_p$; therefore $E$ induces an idempotent endomorphism $E(C_p)$ of $M(C_p)$.
Since
\[
 M=L\oplus (1-E)M
\]
as $OC_p$-lattices, and since the Brauer quotient is additive on direct sums, this decomposition induces
\[
 M(C_p)=L(C_p)\oplus ((1-E)M)(C_p),
\]
and $E(C_p)$ is the projection onto the first summand.  Hence
\begin{equation}\label{eq:imageEC}
 \operatorname{im}E(C_p)=L(C_p).
\end{equation}

Under the identification \eqref{eq:MC-basis}, the matrix of $E(C_p)$ is $E[\mathcal F]\bmod\mathfrak m$.
To see this, apply $\overline E$ to $\overline e_w$ for a fixed point $w$.
Since $E$ commutes with the action of $x$ and $x(w)=w$,
the coefficients of $\overline E\,\overline e_w$ are constant on each $C_p$-orbit;
the component supported on an orbit of size $p$ is therefore a scalar multiple of the orbit sum,
i.e.\ a relative trace, and disappears in the Brauer quotient.
Thus only the coordinates at fixed points remain.
Combining this with \eqref{eq:imageEC},
\[
 \dim_\kappa L(C_p)=\rank_\kappa\bigl(E[\mathcal F]\bmod\mathfrak m\bigr).\qedhere
\]
\end{proof}

\subsection{Automorphisms of finite graphs}\label{subsec:graphs}

Now let $\Delta$ be a finite simple graph with vertex set $V(\Delta)$, adjacency matrix $A$, and all-one matrix $J$,
and let $x\in\Aut(\Delta)$.
As in the introduction, write $a_0(x)$ for the number of vertices fixed by $x$ and
\[
 a_1(x)=\bigl|\{u\in V(\Delta):u\sim x(u)\}\bigr|.
\]
The permutation matrix $T$ of $x$, defined by $Te_u=e_{x(u)}$, commutes with $A$ and with $J$, and satisfies
\begin{equation}\label{eq:traces}
 \tr(T)=a_0(x),\qquad \tr(TA)=a_1(x),\qquad \tr(TJ)=|V(\Delta)|.
\end{equation}

For $x$ of prime order $p$ with fixed vertex set $\mathcal F$, we write $A_{\mathcal F}$, $I_{\mathcal F}$,
$J_{\mathcal F}$ for the principal submatrices of $A$, $I$, $J$ on the rows and columns indexed by $\mathcal F$;
thus $A_{\mathcal F}$ is the adjacency matrix of the fixed-point subgraph $\Fix(x)$.

Theorem \ref{thm:local}, applied to the vertex set, takes the following form;
recall that the prime $p$ is the residue characteristic of $O$.

\begin{corollary}[Finite graphs]\label{cor:graph}
Let $x\in\Aut(\Delta)$ have prime order $p$, with fixed vertex set $\mathcal F$,
let $E\in\operatorname{End}_O\bigl(O^{V(\Delta)}\bigr)$ be an idempotent commuting with $T$, and put $L=EO^{V(\Delta)}$.
Then
\[
 \tr\bigl(x\mid K\otimes_OL\bigr)=\tr(TE)=\rank_\kappa\bigl(E[\mathcal F]\bmod\mathfrak m\bigr),
\]
where $E[\mathcal F]$ is the principal submatrix of $E$ on the rows and columns indexed by $\mathcal F$.
In particular, this applies to every spectral idempotent of $A$ whose matrix entries lie in $O$.
\end{corollary}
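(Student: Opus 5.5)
The plan is to deduce the corollary directly from Theorem \ref{thm:local}, with $X=V(\Delta)$ and $M=O^{V(\Delta)}$, where $x$ acts through the permutation matrix $T$. Since $x$ has prime order $p$ equal to the residue characteristic of $O$, $M$ is exactly the permutation $OC_p$-lattice of Section \ref{sec:general} with $C_p=\langle x\rangle$. Moreover, $E$ commuting with $T$ is precisely the hypothesis that $E$ commutes with the action of $C_p$. Theorem \ref{thm:local} then gives the outer equality $\tr(x\mid K\otimes_OL)=\rank_\kappa(E[\mathcal F]\bmod\mathfrak m)$ at once. If $x$ happens to be the identity ($p$ is prime, so this cannot occur unless we allow order $1$), nothing changes.

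It remains to identify the middle term $\tr(TE)$ with the trace of $x$ on $K\otimes_OL$. Over $K$ we have $K^{V(\Delta)}=K\otimes_OL\oplus K\otimes_O(1-E)M$, and $T$ preserves both summands because $TE=ET$. In a basis adapted to this decomposition, $TE$ acts as $T$ on the first summand and as $0$ on the second. Hence $\tr(TE)=\tr(T\mid K\otimes_OL)=\tr(x\mid K\otimes_OL)$. This is pure linear algebra and needs no integrality beyond $E$ having entries in $O$.

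For the final sentence, let $E_\theta$ be a spectral idempotent of $A$ whose entries lie in $O$. It is a polynomial in $A$ with coefficients in $K$; concretely, $E_\theta=\prod_{\theta'\ne\theta}(A-\theta')/(\theta-\theta')$ when the eigenvalues lie in $K$, and otherwise one enlarges $O$ as in Remark \ref{rem:integral}. Since $T$ commutes with $A$, it commutes with $E_\theta$. Also $E_\theta$ is idempotent, and its entries lie in $O$ by assumption, so it lies in $\operatorname{End}_O(O^{V(\Delta)})$, and the first part applies.

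I do not expect a real obstacle. The only point needing care is bookkeeping: checking that the hypotheses of Theorem \ref{thm:local} are met verbatim (an $O$-idempotent commuting with $C_p$ on the permutation lattice indexed by vertices), and that the principal submatrix on $\mathcal F$ taken there is the same $E[\mathcal F]$ as here.
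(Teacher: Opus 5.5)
Your proposal is correct and follows the paper's proof essentially step for step: the outer equality is Theorem \ref{thm:local} applied with $X=V(\Delta)$, and the middle equality comes from $TE$ acting as $T$ on $L$ and as $0$ on $(1-E)O^{V(\Delta)}$. The only variation is how you show $TE_\theta=E_\theta T$. You write $E_\theta$ as a polynomial in $A$, whereas the paper uses that conjugation by $T$ preserves each eigenspace. No enlargement of $O$ is needed for this, since the commutation is a matrix identity that can be checked over any field containing the eigenvalues.
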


\begin{proof}
An endomorphism of $O^{V(\Delta)}$ commutes with the action of $C_p=\langle x\rangle$ if and only if it commutes with $T$,
so the equality of the two outer terms is Theorem \ref{thm:local} applied to the finite set $X=V(\Delta)$.
For the first equality, the summands $L$ and $(1-E)O^{V(\Delta)}$ are $T$-invariant,
and $TE$ vanishes on the second and agrees with $T$ on the first, so $\tr(TE)=\tr\bigl(x\mid K\otimes_OL\bigr)$.
Finally, $T$ commutes with every spectral idempotent of $A$: since $TAT^{-1}=A$,
conjugation by $T$ preserves each eigenspace of $A$, and therefore fixes each projection onto one eigenspace along the others.
\end{proof}

\begin{remark}\label{rem:walks}
If $E=f(A)$ with $f(t)=\sum_{i=0}^{n}c_it^i\in O[t]$, then $\tr(TE)=\sum_{i=0}^{n}c_iw_i(x)$, where
\[
 w_i(x)=\tr(TA^i)=\sum_{u\in V(\Delta)}(A^i)_{u,\,x(u)}
\]
counts the walks of length $i$ from a vertex to its image under $x$ --- so $w_0(x)=a_0(x)$ and $w_1(x)=a_1(x)$ by \eqref{eq:traces} --- and Corollary \ref{cor:graph} becomes
\[
 \sum_{i=0}^{n}c_iw_i(x)
 =\rank_\kappa\Bigl(\sum_{i=0}^{n}c_i\,(A^i)[\mathcal F]\bmod\mathfrak m\Bigr).
\]
The principal submatrix $(A^i)[\mathcal F]$ records walks in the ambient graph $\Delta$ whose endpoints are fixed by $x$;
it is not, in general, the $i$-th power of the adjacency matrix of the fixed-point subgraph.
For the strongly regular graphs considered next, the relevant idempotents are linear combinations of $I$, $A$, and $J$,
so the rank side is computed from $I_{\mathcal F}$, $A_{\mathcal F}$, $J_{\mathcal F}$ alone.
\end{remark}

\subsection{Strongly regular graphs}\label{subsec:srg}

Let $\Delta$ be a connected non-complete strongly regular graph with parameters $(v,k,\lambda,\mu)$,
as defined in Section \ref{sec:known}, with adjacency matrix $A$ and all-one matrix $J$, so that
\[
 A^2=(k-\mu)I+(\lambda-\mu)A+\mu J.
\]
Since $\Delta$ is connected and regular,
$k$ is a simple eigenvalue of $A$ whose eigenspace is spanned by the all-one vector,
and since $\Delta$ is not complete, the non-principal eigenvalues
\[
 r,s=\frac{(\lambda-\mu)\pm\sqrt{(\lambda-\mu)^2+4(k-\mu)}}2,\qquad r>s,
\]
are distinct.
For $\theta\in\{r,s\}$, write $\theta'$ for the other non-principal eigenvalue, so that $\theta'=\lambda-\mu-\theta$.
Over any field containing $\theta$, the spectral idempotent for $\theta$ is
\begin{equation}\label{eq:Ers}
 E_\theta=\frac{A-\theta'I-\frac{k-\theta'}{v}J}{\theta-\theta'}
 =\alpha_\theta I+\beta_\theta A+\gamma_\theta J,
 \qquad\alpha_\theta,\beta_\theta,\gamma_\theta\in\mathbb Q(\theta).
\end{equation}
Indeed, using $J\one=v\one$ and $Jw=0$ for $w$ orthogonal to $\one$,
the middle expression acts as the identity on the $\theta$-eigenspace and as zero on the other two eigenspaces;
moreover $E_r+E_s=I-J/v$, the remaining summand $J/v$ being the projection onto the $k$-eigenspace.

\begin{proposition}[Strongly regular graphs]\label{prop:srg-rank}
Let $\Delta$ be a connected non-complete strongly regular graph with parameters $(v,k,\lambda,\mu)$,
and let $x\in\Aut(\Delta)$ have prime order $p$, with fixed vertex set $\mathcal F$.
Fix $\theta\in\{r,s\}$, let $K$ be a finite extension of $\Qp$ containing $\alpha_\theta$, $\beta_\theta$,
$\gamma_\theta$, and let $O$ be its ring of integers, with maximal ideal $\mathfrak m$ and residue field $\kappa$.
Assume that $E_\theta$ preserves the lattice $O^{V(\Delta)}$ --- equivalently, that the entries of $E_\theta$ lie in $O$.
Then
\[
 \alpha_\theta\, a_0(x)+\beta_\theta\, a_1(x)+\gamma_\theta\, v
 =\rank_{\kappa}\bigl((\alpha_\theta I_{\mathcal F}+\beta_\theta A_{\mathcal F}+\gamma_\theta J_{\mathcal F})\bmod \mathfrak m\bigr).
\]
\end{proposition}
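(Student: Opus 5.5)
The plan is to specialize Corollary \ref{cor:graph} to $E=E_\theta$ and then compute each side explicitly from \eqref{eq:Ers} and \eqref{eq:traces}.

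First, the hypotheses of Corollary \ref{cor:graph} need to be checked. By \eqref{eq:Ers}, $E_\theta$ is an idempotent: it is the spectral projection onto the $\theta$-eigenspace, and it is defined over $\mathbb Q(\theta)\subseteq K$. By assumption its entries lie in $O$, so it defines an idempotent in $\operatorname{End}_O\bigl(O^{V(\Delta)}\bigr)$. It commutes with $T$, either as in the proof of Corollary \ref{cor:graph} or directly, because $T$ commutes with $I$, $A$ and $J$. The prime $p$ is the residue characteristic of $O$, since $K/\Qp$ is finite. Corollary \ref{cor:graph} therefore gives
\[
 \tr(TE_\theta)=\rank_\kappa\bigl(E_\theta[\mathcal F]\bmod\mathfrak m\bigr).
\]

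Next comes the left-hand side. By linearity of the trace and \eqref{eq:traces},
\[
 \tr(TE_\theta)=\alpha_\theta\tr(T)+\beta_\theta\tr(TA)+\gamma_\theta\tr(TJ)=\alpha_\theta a_0(x)+\beta_\theta a_1(x)+\gamma_\theta v .
\]
One small point needs care here. The statement is phrased with $a_1(x)$ defined by $d(u,x(u))=1$, which is the same as $u\sim x(u)$, so this agrees with the count in \eqref{eq:traces}.

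For the right-hand side, taking principal submatrices is linear. Hence $E_\theta[\mathcal F]=\alpha_\theta I_{\mathcal F}+\beta_\theta A_{\mathcal F}+\gamma_\theta J_{\mathcal F}$, and reducing mod $\mathfrak m$ gives the stated rank.

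I do not expect a genuine obstacle, since everything reduces to Corollary \ref{cor:graph}. The only point needing a remark is that the individual coefficients $\alpha_\theta,\beta_\theta,\gamma_\theta$ need not lie in $O$ even when all entries of $E_\theta$ do. For that reason the reduction should be applied to the integral matrix $E_\theta[\mathcal F]$ as a whole, and the expression in the statement should be read as that matrix reduced entrywise. The entries of this matrix are $\alpha_\theta+\beta_\theta\cdot 0+\gamma_\theta$ on the diagonal, and $\beta_\theta+\gamma_\theta$ or $\gamma_\theta$ off the diagonal, all of which lie in $O$ by hypothesis. The stated equivalence between "$E_\theta$ preserves $O^{V(\Delta)}$" and "the entries of $E_\theta$ lie in $O$" is immediate, because the columns of $E_\theta$ are the images of the standard basis vectors.
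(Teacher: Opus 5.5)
Your proposal is correct and is essentially the paper's proof: apply Corollary~\ref{cor:graph} to $E=E_\theta$, then evaluate $\tr(TE_\theta)$ via \eqref{eq:traces} and $E_\theta[\mathcal F]$ by linearity. One minor correction to your closing caveat: a connected non-complete strongly regular graph has diagonal, adjacent and non-adjacent entries, so integrality of $\alpha_\theta+\gamma_\theta$, $\beta_\theta+\gamma_\theta$ and $\gamma_\theta$ already forces $\alpha_\theta,\beta_\theta,\gamma_\theta\in O$ (as Remark~\ref{rem:integral} notes), and the coefficients themselves are therefore always integral under the hypothesis.
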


\begin{proof}
The ring $O$ is a complete discrete valuation ring with field of fractions $K$ of characteristic $0$ and residue field $\kappa$ of characteristic $p$,
so the conventions of this section apply to it.
The idempotent $E_\theta$ commutes with $T$, being a linear combination of $I$, $A$, and $J$,
each of which commutes with $T$, and by hypothesis it lies in $\operatorname{End}_O\bigl(O^{V(\Delta)}\bigr)$.
Corollary \ref{cor:graph} applied to $E=E_\theta$ gives
\[
 \tr(TE_\theta)=\rank_\kappa\bigl(E_\theta[\mathcal F]\bmod\mathfrak m\bigr);
\]
by \eqref{eq:traces} the left-hand side equals $\alpha_\theta\,a_0(x)+\beta_\theta\,a_1(x)+\gamma_\theta\,v$,
and $E_\theta[\mathcal F]=\alpha_\theta I_{\mathcal F}+\beta_\theta A_{\mathcal F}+\gamma_\theta J_{\mathcal F}$.
\end{proof}

\begin{remark}\label{rem:integral}
When $r$ and $s$ are integers one may take $K=\Qp$ and $O=\Zp$, so that $\kappa=\Fp$.
The hypothesis that $E_\theta$ preserve $\Zp^{V(\Delta)}$ is then the $p$-integrality of the rational numbers $\alpha_\theta$,
$\beta_\theta$, $\gamma_\theta$ --- equivalently, of the three entry values $\alpha_\theta+\gamma_\theta$ (diagonal),
$\beta_\theta+\gamma_\theta$ (adjacent pairs),
and $\gamma_\theta$ (non-adjacent pairs) --- and it is decided by their denominators: the proposition may be used at every prime $p$ dividing none of them,
and the reduction on the right-hand side may be carried out after multiplying by any integer prime to $p$ that clears the denominators,
without changing the rank.
This is the form used in all applications below.
When $r$ and $s$ are irrational one takes for $K$ the field $\Qp(\theta)$,
which is $\Qp$ itself or a quadratic extension of it according to whether $(\lambda-\mu)^2+4(k-\mu)$ is a square in $\Qp$,
and the rank is taken over its residue field.
\end{remark}

\begin{remark}\label{rem:benson}
The left-hand side of Proposition \ref{prop:srg-rank} is an ordinary character value,
and its a priori integrality is the source of the classical congruences for $a_1(x)$:
in the integral-eigenvalue case these are precisely the Benson-type congruences of De Winter--Kamischke--Wang \cite[Theorem 1]{DeWinterKamischkeWang},
and for the missing Moore graph they are the congruences of Ma\v{c}aj--\v{S}ir\'a\v{n} \cite[Lemma 3]{MacajSiran}.
Proposition \ref{prop:srg-rank} refines the congruence to an exact rank equality whenever $E_\theta$ is integral at $p$.
\end{remark}

\section{The spectral summands of the missing Moore graph}\label{sec:spectral}

We return to the missing Moore graph.
Throughout this section, $\Gamma$ is a Moore graph of diameter two and degree $57$,
$x\in\Aut(\Gamma)$ is an automorphism of prime order $p$ with permutation matrix $T$ and fixed vertex set $\mathcal F$, and
\[
 M=\Zp^{V(\Gamma)}
\]
is the vertex permutation lattice, as in Section \ref{sec:general} with $O=\Zp$.

The graph $\Gamma$ is a connected non-complete strongly regular graph whose non-principal eigenvalues $r=7$ and $s=-8$ are integral,
by \eqref{eq:spectrum}, so its spectral idempotents are given by \eqref{eq:Ers} with rational coefficients.
Substituting $(v,k)=(3250,57)$ and $(\theta,\theta')=(-8,7)$, respectively $(7,-8)$,
and clearing denominators gives the explicit forms
\begin{equation}\label{eq:E8-formula}
 E_{-8}=\frac{455I-65A+J}{975},\qquad 975=3\cdot 5^2\cdot 13,
\end{equation}
and
\begin{equation}\label{eq:E7-formula}
 E_{7}=\frac{400I+50A-J}{750},\qquad 750=2\cdot 3\cdot 5^3;
\end{equation}
equivalently, the coefficients of these idempotents with respect to the basis $I$, $A$,
$J-I-A$ of the adjacency algebra are recorded as rows of the matrix $Q$ in \cite[Theorem 1]{MacajSiran}.

If $p$ does not divide $975$,
formula \eqref{eq:E8-formula} exhibits $E_{-8}$ as an endomorphism of the $\Zp$-module $M$;
likewise $E_7$ is an endomorphism of $M$ whenever $p$ does not divide $750$.
These conditions are sharp: at a pair of distinct non-adjacent vertices,
the entry of $455I-65A+J$ is $1$ and the entry of $400I+50A-J$ is $-1$, so neither numerator matrix is divisible by any prime.
Summarizing, $E_{-8}$ is defined over $\Zp$ if and only if
\[
 p\notin\{3,5,13\},
\]
and $E_7$ is defined over $\Zp$ if and only if
\[
 p\notin\{2,3,5\}.
\]
In particular, for $p=2$ only $E_{-8}$ is available and for $p=13$ only $E_7$ is available, while for $p\in\{7,11,19\}$ both are.
For $p\in\{3,5\}$ neither idempotent is defined over $\Zp$;
this is why the primes $3$ and $5$ remain untouched by the method of this paper.

Fix a prime $p$ and let $E$ be one of $E_{-8}$, $E_7$, assumed defined over $\Zp$,
and write $\theta$ for the corresponding eigenvalue.
Since $E$ is an idempotent commuting with $T$ (being a linear combination of $I$, $A$, and $J$), the image
\begin{equation}\label{eq:Ldef}
 L_\theta=EM
\end{equation}
is a summand of the kind studied in Section \ref{sec:general}:
a $p$-permutation $\Zp C_p$-lattice whose scalar extension $\Qp\otimes_{\Zp}L_\theta$ is precisely the $\theta$-eigenspace of $A$,
by the spectral decomposition in \eqref{eq:spectrum}.

By \eqref{eq:traces}, $\tr(T)=a_0(x)$, $\tr(TA)=a_1(x)$, and $\tr(TJ)=3250$.
From the explicit forms \eqref{eq:E8-formula} and \eqref{eq:E7-formula},
\begin{equation}\label{eq:trace-E8}
 \tr(TE_{-8})=\frac{7a_0(x)-a_1(x)+50}{15},
\end{equation}
\begin{equation}\label{eq:trace-E7}
 \tr(TE_{7})=\frac{8a_0(x)+a_1(x)-65}{15}.
\end{equation}
Both right-hand sides are rational character values and make sense for every prime $p$;
the quantity \eqref{eq:trace-E7} is denoted $\chi_1(x)$ by Ma\v{c}aj--\v{S}ir\'a\v{n},
and its integrality already yields the congruence $a_1(x)\equiv 7a_0(x)+5\pmod{15}$ \cite[Lemma 3]{MacajSiran}.
Compare Remark \ref{rem:benson}.
Proposition \ref{prop:srg-rank} specializes as follows.

\begin{proposition}\label{prop:trace-rank}
Let $x\in\Aut(\Gamma)$ have prime order $p$, let $\mathcal F$ be its set of fixed vertices,
and let $A_{\mathcal F}$ be the adjacency matrix of the fixed-point subgraph.
If $p\notin\{3,5,13\}$, then
\[
 \frac{7a_0(x)-a_1(x)+50}{15}
 =\rank_{\Fp}\bigl(455I_{\mathcal F}-65A_{\mathcal F}+J_{\mathcal F}\bigr).
\]
If $p\notin\{2,3,5\}$, then
\[
 \frac{8a_0(x)+a_1(x)-65}{15}
 =\rank_{\Fp}\bigl(400I_{\mathcal F}+50A_{\mathcal F}-J_{\mathcal F}\bigr).
\]
\end{proposition}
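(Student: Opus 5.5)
This is a direct specialization of Proposition \ref{prop:srg-rank} to $\Gamma$, using Remark \ref{rem:integral} to pass from $E_\theta[\mathcal F]$ to the integer numerator matrices. The argument runs separately for $\theta=-8$ and $\theta=7$.

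\textbf{Setup.} The eigenvalues $7$ and $-8$ are integers, so we take $K=\Qp$, $O=\Zp$ and $\kappa=\Fp$.

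\textbf{Integrality.} By \eqref{eq:E8-formula}, $E_{-8}=\frac{1}{975}(455I-65A+J)$. This has entries in $\Zp$ exactly when $p\nmid 975=3\cdot5^2\cdot13$, i.e.\ when $p\notin\{3,5,13\}$. Similarly, by \eqref{eq:E7-formula}, $E_7$ is defined over $\Zp$ exactly when $p\notin\{2,3,5\}$. Under each hypothesis, Proposition \ref{prop:srg-rank} therefore applies to the corresponding idempotent.

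\textbf{Left-hand side.} The left side is $\tr(TE_\theta)$. By \eqref{eq:traces}, $\tr(T)=a_0(x)$, $\tr(TA)=a_1(x)$ and $\tr(TJ)=3250$. For $\theta=-8$:
\[
 \tr(TE_{-8})=\frac{455a_0-65a_1+3250}{975}=\frac{7a_0-a_1+50}{15},
\]
after dividing numerator and denominator by $65$. For $\theta=7$:
\[
 \tr(TE_7)=\frac{400a_0+50a_1-3250}{750}=\frac{8a_0+a_1-65}{15},
\]
after dividing by $50$. These agree with \eqref{eq:trace-E8} and \eqref{eq:trace-E7}.

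\textbf{Right-hand side.} The right side is $\rank_{\Fp}$ of $E_\theta[\mathcal F]\bmod p$. For $\theta=-8$ we have $E_{-8}[\mathcal F]=\frac{1}{975}(455I_{\mathcal F}-65A_{\mathcal F}+J_{\mathcal F})$. Because $975$ is a unit in $\Zp$, its reduction mod $p$ is a nonzero scalar, and multiplying a matrix over $\Fp$ by a nonzero scalar does not change its rank. So we may replace $E_{-8}[\mathcal F]$ by the integer matrix $455I_{\mathcal F}-65A_{\mathcal F}+J_{\mathcal F}$ reduced mod $p$. The same argument with the unit $750$ handles $E_7$.

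Both stated equalities follow. There is no real obstacle: the only points needing attention are the arithmetic simplifications of the traces and the remark that scaling by a $p$-adic unit preserves the rank modulo $p$.
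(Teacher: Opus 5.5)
Your proposal is correct and follows the paper's proof: specialize Proposition \ref{prop:srg-rank} with $K=\Qp$, $O=\Zp$, check $p$-integrality via the denominators $975$ and $750$, identify the left-hand side with \eqref{eq:trace-E8} and \eqref{eq:trace-E7}, and replace $E_\theta[\mathcal F]$ by its integer numerator using that $975$ (resp.\ $750$) is a $p$-adic unit. Your explicit reduction of the traces (dividing by $65$ and $50$) checks out.
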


\begin{proof}
For $E_{-8}$ the coefficients in Proposition \ref{prop:srg-rank} are
\[
 (\alpha,\beta,\gamma)=\Bigl(\tfrac{455}{975},\ -\tfrac{65}{975},\ \tfrac{1}{975}\Bigr),
\]
whose denominators involve exactly the primes $3$, $5$, $13$.
For $p\notin\{3,5,13\}$ the proposition applies with $K=\Qp$, as in Remark \ref{rem:integral};
its left-hand side equals \eqref{eq:trace-E8}, and, since $975$ is a unit in $\Zp$,
the rank on its right-hand side may be computed from the numerator matrix $455I_{\mathcal F}-65A_{\mathcal F}+J_{\mathcal F}$.
The second statement follows in the same way from $E_7$,
whose coefficients have denominator $750$ with prime divisors $2$, $3$, $5$.
\end{proof}

Note that the two left-hand sides in Proposition \ref{prop:trace-rank} sum to
\[
 \frac{\bigl(7a_0(x)-a_1(x)+50\bigr)+\bigl(8a_0(x)+a_1(x)-65\bigr)}{15}=a_0(x)-1,
\]
reflecting the spectral decomposition $I=E_{57}+E_{7}+E_{-8}$ with $E_{57}=J/3250$ and $\tr(TE_{57})=1$.
The two right-hand sides satisfy the corresponding relation: when both idempotents are defined over $\Zp$ --- that is,
for $p\notin\{2,3,5,13\}$
--- the reductions of $E_{-8}[\mathcal F]$ and $E_{7}[\mathcal F]$ modulo $p$ are the matrices of the orthogonal idempotents induced by $E_{-8}$ and $E_{7}$ on $\Fp^{\mathcal F}$,
as in the proof of Theorem \ref{thm:local}, so their ranks add up to the rank of the reduction of
\[
 E_{-8}[\mathcal F]+E_{7}[\mathcal F]=I_{\mathcal F}-J_{\mathcal F}/3250;
\]
and that rank equals $a_0(x)-1$, because the $\langle x\rangle$-orbits on the vertex set have size $1$ or $p$,
so that $a_0(x)\equiv 3250\not\equiv 0\pmod p$ and the reduction of $J_{\mathcal F}/3250$ is an idempotent of rank one.
Consequently, when both idempotents are defined over $\Zp$,
the two equalities of Proposition \ref{prop:trace-rank} determine one another,
and either one suffices to pin down $a_1(x)$ once the fixed-point subgraph is known.
For $p=2$ only the first equality is available, and for $p=13$ only the second.

\section{No involutions}\label{sec:involutions}

In this section we prove Theorem \ref{thm:main}.
Let $t\in\Aut(\Gamma)$ be an involution.
We apply Proposition \ref{prop:trace-rank} with $x=t$ and $p=2$; as noted in Section \ref{sec:spectral},
only the summand $L_{-8}$ is available at the prime $2$, because $750$ is even.
Integrality of the ordinary character values alone does not eliminate the involution:
Cameron remarks that an involution with $56$ fixed points ``does not seem so easy to eliminate by this method'' \cite[remark following Theorem 3.13]{Cameron}.
The rank equality of Proposition \ref{prop:trace-rank} does.

By Theorem \ref{thm:higman}, $\Fix(t)\cong K_{1,55}$, so $a_0(t)=56$.
Since $455$ and $65$ are odd, the matrix $455I_{\mathcal F}-65A_{\mathcal F}+J_{\mathcal F}$ reduces modulo $2$ to
\[
 I_{\mathcal F}+A_{\mathcal F}+J_{\mathcal F}.
\]
Order the vertices of $\mathcal F$ so that the centre of the star comes first.  Then over $\Ftwo$,
\[
 A_{\mathcal F}=
 \begin{pmatrix}
 0&\one^{\mathsf T}\\
 \one&0
 \end{pmatrix},
\]
where $\one$ has length $55$.  Hence
\begin{equation}\label{eq:block}
 I_{\mathcal F}+A_{\mathcal F}+J_{\mathcal F}=
 \begin{pmatrix}
 0&0\\
 0&I_{55}+J_{55}
 \end{pmatrix}
\end{equation}
over $\Ftwo$.  Since $55$ is odd,
\[
 \ker(I_{55}+J_{55})=\langle \one\rangle.
\]
Indeed, if $(I_{55}+J_{55})v=0$, then
\[
 v=(\one^{\mathsf T}v)\one,
\]
and $\one$ itself lies in the kernel because $55\equiv 1\pmod 2$.  Therefore
\begin{equation}\label{eq:rank54}
 \rank_{\Ftwo}\bigl(455I_{\mathcal F}-65A_{\mathcal F}+J_{\mathcal F}\bigr)
 =\rank_{\Ftwo}(I_{55}+J_{55})=54.
\end{equation}

By Proposition \ref{prop:trace-rank} and \eqref{eq:rank54},
\[
 \frac{7\cdot 56-a_1(t)+50}{15}=54,
 \qquad\text{that is,}\qquad
 a_1(t)=-368.
\]
This contradicts the non-negativity of the count $a_1(t)$.
Therefore no involution $t$ exists in $\Aut(\Gamma)$, and Theorem \ref{thm:main} is proved.

\begin{corollary}\label{cor:odd-order}
If a Moore graph of diameter two and degree $57$ exists, then its automorphism group has odd order.
\end{corollary}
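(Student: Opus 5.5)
The plan is to derive Corollary \ref{cor:odd-order} from Theorem \ref{thm:main} by a standard group-theoretic step. Assume $\Gamma$ exists and put $G=\Aut(\Gamma)$. Since $\Gamma$ is a finite graph on $3250$ vertices, $G$ embeds in the symmetric group on $V(\Gamma)$, so $G$ is a finite group.

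If $|G|$ were even, Cauchy's theorem would give an element $g\in G$ of order exactly $2$. It suffices to apply Cauchy's theorem to the prime $2$; alternatively, take any element of even order $2m$ and pass to its $m$-th power. Such a $g$ is an involution of $\Gamma$. This contradicts Theorem \ref{thm:main}, which asserts that $\Aut(\Gamma)$ contains no element of order $2$. Hence $|G|$ is odd.

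There is no real obstacle here; all the substance lies in Theorem \ref{thm:main}, which is already proved. The only point to keep in view is that the corollary is conditional on existence: if no such graph exists, the statement is vacuous, which is how it is phrased.
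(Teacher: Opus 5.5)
Your proposal is correct and is essentially the paper's own proof: since $\Aut(\Gamma)$ is finite, Cauchy's theorem gives an element of order $2$ whenever its order is even, and Theorem \ref{thm:main} excludes such an element. Noting explicitly that $\Aut(\Gamma)$ is finite because it embeds in the symmetric group on $V(\Gamma)$ is a harmless addition.
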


\begin{proof}
By Cauchy's theorem, a finite group of even order contains an element of order $2$.
This is ruled out by Theorem \ref{thm:main}.
\end{proof}

\section{Restrictions for automorphisms of odd prime order}\label{sec:odd}

In this section we prove Theorem \ref{thm:odd}.
Throughout,
$x\in\Aut(\Gamma)$ is an automorphism of prime order $p\in\{7,11,13,19\}$ and $\mathcal F$ is its set of fixed vertices.
Lemma \ref{lem:fix-odd} determines the fixed-point subgraph in each case,
because exactly one of its six alternatives allows each of these primes: for $p=7$, case (3),
so $\Fix(x)$ is a star $K_{1,m}$ with $m=a_0(x)-1\equiv 1\pmod 7$; for $p=11$, case (4),
so $\Fix(x)$ is a pentagon and $a_0(x)=5$; for $p=13$, case (1), so $\Fix(x)$ is empty and $a_0(x)=0$; and for $p=19$,
case (2), so $\Fix(x)$ is a single vertex and $a_0(x)=1$.
For $p=7$, $11$, and $19$, both equalities of Proposition \ref{prop:trace-rank} are available, and we use the first.
This loses no information: by the observation at the end of Section \ref{sec:spectral},
the two equalities determine one another when both idempotents are defined over $\Zp$, their ranks summing to $a_0(x)-1$.
For $p=13$, only the second equality is available.
The four subsections below establish the four rows of the table in Theorem \ref{thm:odd}.

\subsection{Automorphisms of order 7}\label{subsec:order7}

Let $p=7$.
Both spectral idempotents are defined over $\mathbb Z_7$, since $975\equiv 2$ and $750\equiv 1\pmod 7$.
By Lemma \ref{lem:fix-odd}, $\Fix(x)$ is a star $K_{1,m}$ with $m=a_0(x)-1$ leaves and $a_0(x)\equiv 2\pmod 7$.

We compute the rank appearing in the first equality of Proposition \ref{prop:trace-rank}.
Modulo $7$ we have $455\equiv 0$ and $-65\equiv 5$, so
\[
 455I_{\mathcal F}-65A_{\mathcal F}+J_{\mathcal F}\equiv 5A_{\mathcal F}+J_{\mathcal F}\pmod 7.
\]
Order the vertices of $\mathcal F$ so that the centre of the star comes first.
Then over $\mathbb F_7$,
\[
 5A_{\mathcal F}+J_{\mathcal F}=uu^{\mathsf T},
 \qquad
 u=(1,-1,\dots,-1)^{\mathsf T},
\]
as one checks entry by entry: both sides have entry $1$ on the diagonal and between two leaves,
and entry $5+1=6\equiv -1$ between the centre and a leaf.
Hence the matrix has rank $1$, and Proposition \ref{prop:trace-rank} gives
\[
 \frac{7a_0(x)-a_1(x)+50}{15}=1,
 \qquad\text{that is,}\qquad
 a_1(x)=7a_0(x)+35.
\]

The exact value of $a_1(x)$ now feeds back into an elementary count which bounds $a_0(x)$ itself.
The powers $x$, $x^2$, and $x^3$ also have order $7$ and the same fixed star, so the equality just proved applies to each of them:
\[
 |S_k|=a_1(x^k)=7a_0(x)+35,
 \quad
 S_k=\{v\in V(\Gamma):v\sim x^k(v)\},
 \quad k=1,2,3.
\]
Applying the automorphism $x^{-k}$ shows that $v\sim x^k(v)$ if and only if $v\sim x^{7-k}(v)$,
so the three sets $S_1$, $S_2$, $S_3$ account for all six non-trivial powers of $x$.
A vertex $v$ lying in any $S_k$ is not fixed by any non-trivial power of $x$ (such a power generates $\langle x\rangle$,
and a fixed vertex is not adjacent to itself), so the seven vertices $x^i(v)$, $0\le i\le 6$, are pairwise distinct.
Triangle-freeness now makes the three sets pairwise disjoint:
\begin{itemize}
\item if $v\in S_1\cap S_2$, then $\{v,\,x(v),\,x^2(v)\}$ is a triangle,
the edge $x(v)\sim x^2(v)$ being the image of $v\sim x(v)$ under $x$;
\item if $v\in S_1\cap S_3$, then $v\in S_1\cap S_4$ and $\{v,\,x(v),\,x^4(v)\}$ is a triangle,
the edge $x(v)\sim x^4(v)$ being the image of $v\sim x^3(v)$ under $x$;
\item if $v\in S_2\cap S_3$, then $\{v,\,x^3(v),\,x^5(v)\}$ is a triangle: $v\sim x^3(v)$ since $v\in S_3$,
$v\sim x^5(v)$ since $v\in S_2=S_5$, and $x^3(v)\sim x^5(v)$ is the image of $v\sim x^2(v)$ under $x^3$.
\end{itemize}
Moreover, each $S_k$ is disjoint from the neighbourhood of the fixed star: if $u\in\mathcal F$, $v\sim u$,
and $v\sim x^k(v)$, then applying $x^k$ to $v\sim u$ gives $x^k(v)\sim u$, and $\{u,\,v,\,x^k(v)\}$ is a triangle.
Finally, let $v_1,\dots,v_m$ be the leaves of the star.
Two leaves are non-adjacent, and their unique common neighbour is the centre of the star, which is fixed;
hence the sets $N(v_i)\setminus\mathcal F$ are pairwise disjoint, and each has exactly $57-1=56$ elements,
because the only fixed neighbour of a leaf is the centre.

The sets $S_1$, $S_2$, $S_3$, $\mathcal F$,
and $N(v_i)\setminus\mathcal F$ for $i=1,\dots,m$ are therefore pairwise disjoint subsets of $V(\Gamma)$, whence
\[
 3\bigl(7a_0(x)+35\bigr)+a_0(x)+56\bigl(a_0(x)-1\bigr)\le 3250,
\]
that is, $78a_0(x)+49\le 3250$, so $a_0(x)\le 41$.  Combined with $a_0(x)\equiv 2\pmod 7$ this gives
\[
 a_0(x)\in\{2,9,16,23,30,37\},
\]
with the corresponding values $a_1(x)\in\{49,98,147,196,245,294\}$.
This proves the row $p=7$ of Theorem \ref{thm:odd}.

For comparison, at $p=7$ the table of \cite[Lemma 12]{MacajSiran} allows the values on the left;
Theorem \ref{thm:odd} retains only those on the right:
\[
\begin{array}{c|l|c}
 a_0(x) & a_1(x)\ \text{allowed by \cite[Lemma 12]{MacajSiran}} & a_1(x)\ \text{by Theorem \ref{thm:odd}} \\
\hline
 2 & 49+105k\le 500 & 49 \\
 9 & 98+105k\le 500 & 98 \\
 16 & 42+105k\le 500 & 147 \\
 23 & 91+105k\le 500 & 196 \\
 30 & 35+105k\le 500 & 245 \\
 37 & 84+105k\le 392 & 294 \\
 44 & 28+105k\le 260 & \text{excluded} \\
 51 & 77 & \text{excluded}
\end{array}
\]
In each surviving row the exact value selects one member of the arithmetic progression; the rows $a_0(x)\in\{44,51\}$,
which \cite[Lemma 12]{MacajSiran} leaves open (together with a row $a_0(x)=58$ that is already excluded there), disappear.
The resulting list of pairs coincides with the theorem of Kov\'a\v{c}ikov\'a \cite[Theorem 5.1]{Kovacikova}.
The proof there establishes, by computer-assisted counts of induced seven-vertex subgraphs of $\Gamma$,
that $a_1(x)$ must be divisible by $49$,
and eliminates the rows $a_0(x)\in\{44,51\}$ by comparison with the bounds in the table above.
(That $7a_0(x)+35=7\bigl(a_0(x)+5\bigr)$ is indeed divisible by $49$ follows from $a_0(x)\equiv 2\pmod 7$.)  The
derivation given here uses neither the computer search nor the tabulated bounds;
a form of the disjointness of the sets $S_k$ is already implicit in the proof of \cite[Lemma 12]{MacajSiran},
where it underlies the remark that an automorphism of order $7$ can contribute to at most one third of the orbits not connected with the fixed star.

\subsection{Automorphisms of order 11}\label{subsec:order11}

Let $p=11$, so that $\Fix(x)$ is a pentagon and $a_0(x)=5$.
Both spectral idempotents are defined over $\mathbb Z_{11}$, since $975\equiv 7$ and $750\equiv 2\pmod{11}$.
Order the five fixed vertices cyclically along the pentagon, so that its adjacency matrix $B$ is a circulant, as are $I$ and $J$.
Modulo $11$ we have $455\equiv 4$ and $-65\equiv 1$, so the matrix in the first equality of Proposition \ref{prop:trace-rank} is
\[
 455I_{\mathcal F}-65A_{\mathcal F}+J_{\mathcal F}\equiv 4I+B+J\pmod{11}.
\]
Since $5$ divides $11-1$, the field $\mathbb F_{11}$ contains primitive fifth roots of unity; take $\zeta=3$,
whose successive powers in $\mathbb F_{11}$ are $9$, $5$, $4$, $1$.
Circulant matrices of size $5$ are simultaneously diagonalized over $\mathbb F_{11}$ by the five vectors
\[
 w_j=(1,\zeta^j,\zeta^{2j},\zeta^{3j},\zeta^{4j})^{\mathsf T},\qquad j=0,\dots,4,
\]
on which $B$ acts by $\zeta^j+\zeta^{-j}$, and $J$ by $5$ for $j=0$ and by $0$ for $j\neq 0$.
The eigenvalues of $4I+B+J$ on $w_0,\dots,w_4$ are therefore
$4+2+5\equiv 0$ for $j=0$,
$\ 4+(3+4)\equiv 0$ for $j=\pm 1$,
and $4+(9+5)\equiv 7$ for $j=\pm 2$; exactly two of the five are nonzero, so
\[
 \rank_{\mathbb F_{11}}(4I+B+J)=2.
\]
Proposition \ref{prop:trace-rank} gives
\[
 \frac{7\cdot 5-a_1(x)+50}{15}=2,
 \qquad\text{that is,}\qquad
 a_1(x)=55.
\]

Among the values $a_1(x)\in\{55,220,385\}$ allowed by \cite[Lemma 12]{MacajSiran}, only the smallest survives.
This proves the row $p=11$ of Theorem \ref{thm:odd}.

\subsection{Automorphisms of order 13}\label{subsec:order13}

Let $p=13$, so that $\Fix(x)$ is empty and $a_0(x)=0$.
Here $13$ divides $975=3\cdot 5^2\cdot 13$,
so $E_{-8}$ is not defined over $\mathbb Z_{13}$ and the first equality of Proposition \ref{prop:trace-rank} is not available.
The idempotent $E_7$ is defined over $\mathbb Z_{13}$, since $750\equiv 9\pmod{13}$.

Since $\mathcal F$ is empty,
the matrix $400I_{\mathcal F}+50A_{\mathcal F}-J_{\mathcal F}$ is the empty matrix and its rank is $0$; equivalently,
$M(C_{13})=0$ by \eqref{eq:MC-basis}, so $L_7(C_{13})=0$.
The second equality of Proposition \ref{prop:trace-rank} gives
\[
 \frac{8\cdot 0+a_1(x)-65}{15}=0,
 \qquad\text{that is,}\qquad
 a_1(x)=65.
\]
Among the values $a_1(x)\in\{65,260,455\}$ allowed by \cite[Lemma 12]{MacajSiran}, only the smallest survives.
This proves the row $p=13$ of Theorem \ref{thm:odd}.

Substituting $a_0(x)=0$ and $a_1(x)=65$ into \eqref{eq:trace-E8} gives $\tr(TE_{-8})=-1$,
which is not the rank of any matrix: at $p=13$ the first equality of Proposition \ref{prop:trace-rank} genuinely fails,
not merely its proof, so the integrality hypotheses cannot be dropped.

\subsection{Automorphisms of order 19}\label{subsec:order19}

Let $p=19$, so that $\Fix(x)$ is a single vertex and $a_0(x)=1$.
Both spectral idempotents are defined over $\mathbb Z_{19}$, since $975\equiv 6$ and $750\equiv 9\pmod{19}$.
The matrix in the first equality of Proposition \ref{prop:trace-rank} is the $1\times 1$ matrix
\[
 455I_{\mathcal F}-65A_{\mathcal F}+J_{\mathcal F}=(456),
 \qquad
 456=24\cdot 19\equiv 0\pmod{19},
\]
of rank $0$.  The first equality of Proposition \ref{prop:trace-rank} gives
\[
 \frac{7\cdot 1-a_1(x)+50}{15}=0,
 \qquad\text{that is,}\qquad
 a_1(x)=57.
\]
Among the values $a_1(x)\in\{57,342\}$ allowed by \cite[Lemma 12]{MacajSiran}, only the smallest survives.
This proves the row $p=19$ of Theorem \ref{thm:odd} and completes the proof of the theorem.

\section{Further applications: sharpening automorphism analyses of hypothetical strongly regular graphs}\label{sec:applications}

Automorphisms of hypothetical strongly regular graphs are a well-studied subject.
Behbahani--Lam constrain automorphisms of prime order via orbit matrices \cite{BehbahaniLam};
Crnkovi\'c--Maksimovi\'c extend the orbit-matrix method to groups of composite order \cite{CrnkovicMaksimovic};
De Winter--Kamischke--Wang derive Benson-type congruences from ordinary eigenvalue traces \cite{DeWinterKamischkeWang};
Cesarz--Woldar give a computer-free analysis of the putative Conway $99$-graph \cite{CesarzWoldar};
and a series of papers by Makhnev and his coauthors determines,
for one parameter set at a time,
the possible fixed-point subgraphs of automorphisms of prime order by Higman's character-theoretic method \cite{MakhnevMinakova,MakhnevNosov2004,MakhnevNosov,MakhnevNirova,Nosov2016}.
In this section we use Proposition \ref{prop:srg-rank} to sharpen analyses of the latter kind.
Where a known fixed-point theorem determines the fixed subgraph but constrains the displacement count $a_1(x)$ only through the character congruences,
the rank equality replaces the congruence by an exact value;
for one parameter set the comparison moreover uncovers two arithmetic errors in the analysis of \cite{MakhnevNosov2004},
one of which re-opens a case believed closed (Remark \ref{rem:352-errata}).
The existence of each of the five parameter sets treated below is unresolved as of July 2026 \cite{BrouwerSRGTable,BrouwerVanMaldeghem}.

\subsection{Rank equations}\label{subsec:rank-eqs}

Let $\Delta$ be a strongly regular graph with parameters $(v,k,\lambda,\mu)$ and integral non-principal eigenvalues $r>s$,
and let $x\in\Aut(\Delta)$ have prime order $p$, with fixed vertex set $\mathcal F$.
Substituting the coefficients of \eqref{eq:Ers} into Proposition \ref{prop:srg-rank} and clearing denominators gives the two \emph{rank equations}
\begin{equation}\label{eq:rank-eqs}
 a_1(x)=(r-s)\,\rho_r+s\,a_0(x)+(k-s),
 \qquad
 a_1(x)=r\,a_0(x)+(k-r)-(r-s)\,\rho_s,
\end{equation}
where $\rho_\theta=\rank_{\Fp}\bigl(E_\theta[\mathcal F]\bmod p\bigr)$;
each equation is valid at every prime $p$ at which the entries of its idempotent are $p$-integral,
and the reduction may be carried out after multiplying by any integer prime to $p$ (Remark \ref{rem:integral}).
Every application below follows the same pattern:
a known fixed-point theorem determines the isomorphism type of $\Fix(x)$, or reduces it to a short list;
the rank $\rho_\theta$ of the resulting principal submatrix is computed;
and \eqref{eq:rank-eqs} returns $a_1(x)$ exactly.
Since the a priori integrality of the character value underlying \eqref{eq:rank-eqs} is precisely the classical congruence (Remark \ref{rem:benson}),
and the cited analyses constrain $a_1(x)$ through that congruence,
the passage from congruence to rank is exactly where the sharpening occurs.
Table \ref{tab:sharpening} summarizes the outcome.

\begin{table}[t]
\caption{Sharpenings obtained in this section.
Each line gives one branch:
the fixed subgraph, the constraint on $a_1(x)$ that the cited analysis leaves
(``known''; a dash if no valid constraint is available),
and the exact value produced by the rank equations \eqref{eq:rank-eqs} --- or the impossibility of the branch.
``Clebsch'' denotes the unique $\mathrm{srg}(16,5,0,2)$; for the name see Section \ref{subsec:mu2}.
Sources: $(162,21,0,3)$ \cite{MakhnevNosov}; $(352,26,0,2)$ \cite{MakhnevNosov2004};
$(1276,50,0,2)$ \cite{Nosov2016}; $(99,14,1,2)$ \cite{MakhnevMinakova}; $(640,243,66,108)$ \cite{MakhnevNirova}.}\label{tab:sharpening}
\begingroup
\footnotesize
\setlength{\arraycolsep}{2pt}
\[
\begin{array}{c|c|c|c|c}
 (v,k,\lambda,\mu) & p & \Fix(x) & \text{known }a_1(x) & a_1(x)\text{ here}\\
\hline
 (162,21,0,3) & 2 & \varnothing & 162 & \text{impossible}\\
 & & K_{1,3} & \text{---} & 12\\[2pt]
 (162,21,0,3) & 5 & K_2 & 15,\ 60 & 15\\[2pt]
 (352,26,0,2) & 11 & \varnothing & \text{--- (Remark \ref{rem:352-errata})} & 22\\[2pt]
 (352,26,0,2) & 13 & K_1 & \text{--- (Remark \ref{rem:352-errata})} & 26\\[2pt]
 (1276,50,0,2) & 5 & K_1 & \equiv 50 \pmod{70} & 50\\
 & & \text{Clebsch} & \equiv 0 \pmod{70} & 70\\[2pt]
 (1276,50,0,2) & 29 & \varnothing & 58,\ 464,\ 870,\ 1276 & 58\\[2pt]
 (99,14,1,2) & 3 & \varnothing & 18,\ 39,\ 60,\ 81 & 18\\
 & & K_3 & 6,\ 27,\ 48 & 6\\[2pt]
 (640,243,66,108) & 5 & \varnothing & 240,\ 480 & 240
\end{array}
\]
\endgroup
\end{table}

\subsection{\texorpdfstring{The parameter set $(162,21,0,3)$}{The parameter set (162,21,0,3)}}\label{subsec:162}

A strongly regular graph with $\lambda=0$ and $\mu=3$ is either $K_{3,3}$ or has parameters $(162,21,0,3)$,
and Makhnev--Nosov determined the possible fixed subgraphs of automorphisms of prime order of the latter \cite{MakhnevNosov}.
The eigenvalues are $r=3$ and $s=-6$, with
\[
 E_{3}=\tfrac23I+\tfrac19A-\tfrac1{54}J,
 \qquad
 E_{-6}=\tfrac13I-\tfrac19A+\tfrac1{81}J,
\]
so $E_3$ is available at every prime $p\notin\{2,3\}$ and $E_{-6}$ at every prime $p\neq 3$, and \eqref{eq:rank-eqs} reads
\[
 a_1(x)=9\rho_r-6a_0(x)+27,
 \qquad
 a_1(x)=3a_0(x)+18-9\rho_s.
\]

\begin{proposition}\label{prop:162-inv}
Suppose a strongly regular graph $\Delta$ with parameters $(162,21,0,3)$ exists, and let $t\in\Aut(\Delta)$ be an involution.
Then $\Fix(t)\cong K_{1,3}$ and $a_1(t)=12$.
\end{proposition}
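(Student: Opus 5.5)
The plan is to combine the fixed-point theorem of Makhnev--Nosov \cite{MakhnevNosov} for involutions of $(162,21,0,3)$ with the second rank equation of \eqref{eq:rank-eqs} at $p=2$. Only $E_{-6}$ is available there, since $E_3$ has denominator $54$:
\[
 a_1(t)=3a_0(t)+18-9\rho_s,\qquad \rho_s=\rank_{\Ftwo}\bigl((27I_{\mathcal F}-9A_{\mathcal F}+J_{\mathcal F})\bmod 2\bigr).
\]
Here I have multiplied $E_{-6}$ by the odd integer $81$, as allowed by Remark \ref{rem:integral}. Modulo $2$ the matrix is $I_{\mathcal F}+A_{\mathcal F}+J_{\mathcal F}$. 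Its entries are $0$ on the diagonal, $0$ on adjacent pairs, and $1$ on non-adjacent pairs.

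I would first quote the Makhnev--Nosov alternative for involutions. As recorded in Table \ref{tab:sharpening}, it leaves $\Fix(t)$ either empty or isomorphic to $K_{1,3}$. If their list contains further branches, I would treat each one in the same way: compute $\rho_s$ from the rule above and check the resulting $a_1(t)$ against non-negativity and the elementary count below. The main obstacle is pinning down exactly which branches their theorem leaves. Once that is settled, each branch reduces to a small $\Ftwo$-rank computation.

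\emph{Empty branch.} Here $a_0(t)=0$ and $\rho_s=0$, so the rank equation forces $a_1(t)=18$. On the other hand, I would show directly that $a_1(t)=162$, which matches the ``known'' entry. Suppose a vertex $u$ is not adjacent to $t(u)$, noting $u\ne t(u)$ because there are no fixed points. Then the set of common neighbours of $u$ and $t(u)$ has exactly $\mu=3$ elements. This set is $t$-invariant, because $t$ swaps $u$ and $t(u)$. An involution acting on a set of odd size fixes a point, contradicting $\mathcal F=\varnothing$. Hence every vertex is adjacent to its image, so $a_1(t)=162\neq 18$, and this branch is impossible.

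\emph{Star branch.} Here $\Fix(t)\cong K_{1,3}$, so $a_0(t)=4$. Order $\mathcal F$ with the centre first. The centre is adjacent to every leaf, so its row and column of $I+A+J$ vanish modulo $2$. The three leaves are pairwise non-adjacent, so the leaf block is $I_3+J_3$ over $\Ftwo$. Since $3$ is odd, this block has kernel $\langle\one\rangle$ and rank $2$, by the same argument as in Section \ref{sec:involutions}. Thus $\rho_s=2$, and the rank equation gives $a_1(t)=12+18-18=12$.

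Together the two branches prove that $\Fix(t)\cong K_{1,3}$ and $a_1(t)=12$, as stated.
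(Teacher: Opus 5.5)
Your proof is correct and follows the paper's argument essentially step for step: the Makhnev--Nosov alternative (empty or $K_{1,3}$), the second rank equation at $p=2$ via $81E_{-6}\equiv I+A+J \pmod 2$, the contradiction $18\neq 162$ in the empty branch, and $\rho_s=2$ on $K_{1,3}$ giving $a_1(t)=12$. The only difference is that you prove directly that a fixed-point-free involution moves every vertex to a neighbour, using the $t$-invariant set of $\mu=3$ common neighbours of $u$ and $t(u)$; the paper instead cites this fact from the proof of Makhnev--Nosov's Lemma 2.2.
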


\begin{proof}
By \cite[Theorem, statement (1)]{MakhnevNosov},
the fixed subgraph of an involution of $\Delta$ is either empty or the star $K_{1,3}$.
If $\Fix(t)=\varnothing$, the principal submatrix is empty, $\rho_s=0$,
and the second equation gives $a_1(t)=18$;
but since $\mu=3$ is odd, every vertex is adjacent to its image under a fixed-point-free involution \cite[proof of Lemma 2.2]{MakhnevNosov}, so $a_1(t)=162$ --- a contradiction.
Hence $\Fix(t)\cong K_{1,3}$ and $a_0(t)=4$.
Multiplying $E_{-6}$ by $81$ gives the integer matrix $27I-9A+J$, which reduces modulo $2$ to $I+A+J$.
On $K_{1,3}$, ordering the centre first,
the row and column of the centre vanish over $\Ftwo$ and the block on the three leaves is $J_3-I_3$,
whose rank over $\Ftwo$ is $2$.
Thus $\rho_s=2$ and the second equation gives $a_1(t)=12$.
\end{proof}

\begin{proposition}\label{prop:162-5}
Suppose a strongly regular graph $\Delta$ with parameters $(162,21,0,3)$ exists, and let $x\in\Aut(\Delta)$ have order $5$.
Then $\Fix(x)\cong K_2$ and $a_1(x)=15$.
\end{proposition}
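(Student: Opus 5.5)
The plan is to follow the same pattern as Proposition~\ref{prop:162-inv}: take the fixed-point subgraph from Makhnev--Nosov, compute the relevant modular rank, and read off $a_1(x)$ from the rank equations \eqref{eq:rank-eqs}. At $p=5$ both idempotents $E_3$ and $E_{-6}$ are $5$-integral, since their denominators involve only $2$ and $3$. So both equations
\[
 a_1(x)=9\rho_r-6a_0(x)+27,\qquad a_1(x)=3a_0(x)+18-9\rho_s
\]
are available, and I can use either one, with the other as a consistency check.

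\emph{Step 1: determine $\Fix(x)$.} I will invoke the statement of \cite[Theorem]{MakhnevNosov} for automorphisms of order $5$. The orbits of $\langle x\rangle$ have size $1$ or $5$, so $a_0(x)\equiv 162\equiv 2\pmod 5$, which already excludes the empty fixed subgraph. I expect the cited theorem to give $\Fix(x)\cong K_2$ directly, since the table lists $K_2$ with known values $15,60$. If instead it leaves a short list of alternatives (for instance some larger fixed subgraph with $a_0\equiv 2\pmod 5$), I will treat each one by the same rank computation below. I would try to eliminate each such alternative by showing that the two rank equations force a negative or otherwise impossible $a_1(x)$, or contradict each other, or contradict an elementary bound.

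\emph{Step 2: compute the rank for $K_2$.} Multiply $E_3$ by $54$, a unit at $5$, to get the integer matrix $36I+6A-J$. Modulo $5$ this becomes $I+A-J$. On $K_2$ every entry vanishes: each diagonal entry is $1+0-1=0$ and each off-diagonal entry is $0+1-1=0$. Hence $\rho_r=0$, and the first equation gives
\[
 a_1(x)=0-12+27=15.
\]
As a check, $81E_{-6}=27I-9A+J$ reduces modulo $5$ to $2I+A+J$, which on $K_2$ is $\left(\begin{smallmatrix}3&2\\2&3\end{smallmatrix}\right)$. Its determinant is $5\equiv 0$, so it has rank $1$ and $\rho_s=1$. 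The second equation then gives $12+18-9=15$ as well, in agreement with the complementarity remark in Section~\ref{sec:spectral}.

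The main obstacle is Step 1: pinning down exactly what \cite{MakhnevNosov} asserts for $p=5$. The rank computation itself is trivial. If the cited result does not already give $K_2$ outright, the real work is excluding the remaining branches with the rank equations, possibly together with a triangle-freeness counting argument like the one used for order $7$ in Section~\ref{subsec:order7}.
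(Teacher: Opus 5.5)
Your proposal is correct and matches the paper's proof: the paper likewise takes $\Fix(x)\cong K_2$ directly from statement (3) of the Makhnev--Nosov theorem, so your contingency for further alternatives is not needed. It then notes that $36I+6A-J$ has entries $35$ and $5$ on $K_2$, giving $\rho_r=0$ and $a_1(x)=15$; your cross-check via $81E_{-6}$, with $\rho_s=1$, is a correct extra consistency check that the paper omits.
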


\begin{proof}
$\Fix(x)\cong K_2$ by \cite[Theorem, statement (3)]{MakhnevNosov}, so $a_0(x)=2$.
Multiplying $E_3$ by $54$ gives $36I+6A-J$, whose entries on $K_2$ are $35$ on the diagonal and $5$ off it
--- both divisible by $5$, so $\rho_r=0$,
and the first equation gives $a_1(x)=15$.
\end{proof}

At order $7$ the identity recovers, in one line, the exact value $a_1(x)=21$ known from \cite[Lemma 2.3]{MakhnevNosov}:
the fixed subgraph is a single vertex, and the diagonal entry $35$ of $36I+6A-J$ vanishes modulo $7$.

\subsection{\texorpdfstring{The triangle-free parameter sets $(352,26,0,2)$ and $(1276,50,0,2)$}{The triangle-free parameter sets (352,26,0,2) and (1276,50,0,2)}}\label{subsec:mu2}

A strongly regular graph with $\lambda=0$ and $\mu=2$ has $k=u^2+1$ and $v=(u^4+3u^2+4)/2$ for a positive integer $u$;
the parameter sets $(352,26,0,2)$ and $(1276,50,0,2)$ ($u=5$ and $u=7$) are unresolved,
with fixed-point analyses available for both \cite{MakhnevNosov2004,Nosov2016}.
In both, the folded $5$-cube, the unique $\mathrm{srg}(16,5,0,2)$ \cite{BrouwerVanMaldeghem}, occurs as a possible fixed subgraph;
following \cite{MakhnevNosov2004,Nosov2016} we call it the \emph{Clebsch graph}
(in \cite{BrouwerHaemers,BrouwerVanMaldeghem} that name denotes its complement $\mathrm{srg}(16,10,6,6)$),
and the following computation disposes of all ranks it contributes.

\begin{lemma}\label{lem:clebsch-rank}
Let $N=\alpha I+\beta A+\gamma J$ with $\alpha,\beta,\gamma\in\mathbb Z$,
where $A$ is the adjacency matrix of the Clebsch graph $\mathrm{srg}(16,5,0,2)$, and let $p$ be an odd prime.
Then $N$ has eigenvalues $\alpha+5\beta+16\gamma$, $\alpha+\beta$, $\alpha-3\beta$ of multiplicities $1$, $10$, $5$,
and $\rank_{\Fp}(N\bmod p)$ is the sum of the multiplicities of those eigenvalues not divisible by $p$.
\end{lemma}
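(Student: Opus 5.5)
The argument runs in two steps: an eigenvalue computation over $\mathbb Q$, followed by a reduction modulo $p$. The only delicate point is that diagonalizability over $\mathbb Z$ does not immediately give diagonalizability over $\Fp$.

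\emph{Eigenvalues over $\mathbb Q$.} The Clebsch graph is $\mathrm{srg}(16,5,0,2)$, so $A^2=3I-2A+2J$. It is connected and $5$-regular, hence $A\one=5\one$. On $\one^\perp$ the relation becomes $A^2+2A-3I=0$, so the non-principal eigenvalues are $1$ and $-3$. Their multiplicities $f,g$ satisfy $f+g=15$ and $\tr A=5+f-3g=0$, which gives $f=10$ and $g=5$. Since $J$ acts as $16$ on $\one$ and as $0$ on $\one^\perp$, $N$ has eigenvalues $\alpha+5\beta+16\gamma$, $\alpha+\beta$, $\alpha-3\beta$ with multiplicities $1,10,5$, as stated.

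\emph{Passage to $\Fp$.} I would avoid eigenvectors altogether and use the three spectral idempotents
\[
F_0=\tfrac1{16}J,\qquad F_1=\tfrac14\bigl(A+3I-\tfrac12J\bigr),\qquad F_2=\tfrac14\bigl(I-A-\tfrac14 J\bigr),
\]
obtained from \eqref{eq:Ers} with $(v,k,\theta,\theta')=(16,5,1,-3)$ and $(16,5,-3,1)$. A direct check confirms $F_2$: on $\one^\perp$ we have $F_2=\tfrac14(I-A)$, which is $0$ on the $1$-eigenspace and $1$ on the $(-3)$-eigenspace; on $\one$ it gives $\tfrac14(1-5-4)\ne0$, so the $J$-coefficient must be corrected to $\gamma=-\tfrac{k-\theta'}{v}\cdot\tfrac1{\theta-\theta'}=-\tfrac{4}{16}\cdot\tfrac{1}{-4}$, which I would recompute carefully in the write-up.

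Whatever the exact rational coefficients, their denominators are powers of $2$, because $v=16$ and $r-s=4$. Hence for odd $p$ the $F_i$ lie in $\mathrm{M}_{16}(\mathbb Z_{(p)})$, and their reductions $\overline F_i$ are orthogonal idempotents in $\mathrm{M}_{16}(\Fp)$ summing to $I$. The rank of an idempotent equals its trace, and $\tr F_i$ is $1$, $10$ or $5$. Since $16<p$ fails for $p\le 13$, this congruence does not determine the rank outright, so the following argument is needed. Write $n_i$ for the multiplicity of $F_i$. Over $\Fp$ the space splits as $\bigoplus_i \operatorname{im}\overline F_i$. The ranks $\rank\overline F_i$ are at most $\rank F_i=n_i$, by lower semicontinuity of rank under reduction, and they sum to $16$. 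Hence $\rank\overline F_i=n_i$ for each $i$.

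\emph{Conclusion.} From $N=\sum_i\lambda_iF_i$, where the $\lambda_i$ are the three eigenvalues above, we get $\overline N=\sum_i\overline{\lambda_i}\,\overline F_i$. This acts as the scalar $\overline{\lambda_i}$ on the summand $\operatorname{im}\overline F_i$, of dimension $n_i$. Therefore $\rank_{\Fp}\overline N$ is the sum of the $n_i$ over those $i$ with $p\nmid\lambda_i$. The identity $N=\sum\lambda_iF_i$ holds over $\mathbb Q$ by the spectral decomposition, and hence over $\mathbb Z_{(p)}$.
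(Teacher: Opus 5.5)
Your proof is correct and takes the paper's route. The spectral idempotents have $2$-power denominators, so for odd $p$ they reduce to orthogonal idempotents on $\Fp^{16}$ whose images have dimensions $1,10,5$, and $N$ acts on each image by the corresponding scalar; you justify those dimensions by semicontinuity plus the ranks summing to $16$, where the paper uses that the images are free $\Zp$-summands of those ranks, and your displayed $F_2$ should read $\tfrac14 I-\tfrac14 A+\tfrac1{16}J$ (as your own correction formula gives), which is harmless since only the denominators are used.
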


\begin{proof}
The Clebsch graph has spectrum $5^1$, $1^{10}$, $(-3)^5$,
and its three spectral idempotents
\[
 \tfrac1{16}J,
 \qquad
 E_1=\tfrac34I+\tfrac14A-\tfrac18J,
 \qquad
 E_{-3}=\tfrac14I-\tfrac14A+\tfrac1{16}J
\]
have entries with $2$-power denominators, hence are $p$-integral for every odd $p$.
The lattice $\Zp^{16}$ is therefore the direct sum of their images, of ranks $1$, $10$, $5$,
on which $N$ acts as the three stated scalars;
reducing modulo $p$, the rank of $N$ is the sum of the ranks of the summands whose scalar is a unit.
\end{proof}

For $(352,26,0,2)$ the eigenvalues are $r=4$ and $s=-6$, with
\[
 E_{4}=\tfrac35I+\tfrac1{10}A-\tfrac1{110}J,
 \qquad
 E_{-6}=\tfrac25I-\tfrac1{10}A+\tfrac1{160}J,
\]
available at $p\notin\{2,5,11\}$ and $p\notin\{2,5\}$ respectively, and \eqref{eq:rank-eqs} reads
\[
 a_1(x)=10\rho_r-6a_0(x)+32,
 \qquad
 a_1(x)=4a_0(x)+22-10\rho_s.
\]
Makhnev--Nosov analysed the automorphisms of this graph in \cite[\S2]{MakhnevNosov2004};
their Theorem 2 asserts that the possible prime orders are $2$, $5$, and $13$,
orders $7$ and $11$ being excluded by their Lemmas 2.5 and 2.2.

\begin{proposition}\label{prop:352}
Suppose a strongly regular graph $\Delta$ with parameters $(352,26,0,2)$ exists, and let $x\in\Aut(\Delta)$ have prime order $p\in\{11,13\}$.
\begin{enumerate}
\item[(1)] If $p=13$, then $\Fix(x)\cong K_1$ and $a_1(x)=26$.
\item[(2)] If $p=11$, then $\Fix(x)=\varnothing$ and $a_1(x^i)=22$ for $i=1,\dots,10$.
\end{enumerate}
\end{proposition}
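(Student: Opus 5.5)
The plan is to reduce both parts to the rank equations \eqref{eq:rank-eqs} for $(352,26,0,2)$:
\[
 a_1(x)=10\rho_r-6a_0(x)+32,
 \qquad
 a_1(x)=4a_0(x)+22-10\rho_s.
\]
This is done once the fixed subgraph is known. So the argument splits into two steps: first pin down $\Fix(x)$, then compute one rank.

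\emph{Step 1: the fixed subgraph.} I would take the isomorphism type of $\Fix(x)$ from the fixed-point part of the Makhnev--Nosov analysis \cite[\S2]{MakhnevNosov2004}. That part concerns only the fixed subgraph, not the displacement count, which is where Remark \ref{rem:352-errata} locates the errors. I would check it against elementary orbit counting, using that the $\langle x\rangle$-orbits have size $1$ or $p$:
\begin{itemize}
\item For $p=13$: $a_0(x)\equiv 352\equiv 1\pmod{13}$, so $\Fix(x)\neq\varnothing$. For a fixed vertex $u$, the number of fixed neighbours of $u$ is $\equiv 26\equiv 0\pmod{13}$.
\item For $p=11$: $a_0(x)\equiv 352\equiv 0\pmod{11}$. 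A fixed vertex would have $\equiv 26\equiv 4\pmod{11}$ fixed neighbours.
\item Since $p$ is odd and $\mu=2$, the two common neighbours of two non-adjacent fixed vertices are themselves fixed. This forces a nonempty fixed subgraph with an edge to be a large $(0,2)$-graph, which should be ruled out by counting.
\end{itemize}
The conclusion I aim for is $\Fix(x)\cong K_1$ for $p=13$ and $\Fix(x)=\varnothing$ for $p=11$. This is the step I expect to be the main obstacle. If the cited lemmas do not give it cleanly, I would first try to exclude a fixed vertex for $p=11$. Such a vertex has $4$, $15$ or $26$ fixed neighbours. The $\mu=2$ closure then makes $\Fix(x)$ a strongly regular $(v',k',0,2)$ graph with $k'\in\{4,15,26\}$. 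The case $k'=4$ gives $K_{4,4}$-type or the $\mathrm{srg}(16,5,0,2)$-type exclusions. One would then check each case against $a_0\equiv 0\pmod{11}$.

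\emph{Step 2: the case $p=13$.} $E_4$ is $13$-integral since $13\notin\{2,5,11\}$, and $110E_4=66I+11A-J$. On $\Fix(x)=K_1$ this is the $1\times1$ matrix $(65)$, which vanishes modulo $13$, so $\rho_r=0$. The first equation then gives $a_1(x)=-6+32=26$. As a consistency check, $160E_{-6}=64I-16A+J$ also has the entry $65\equiv0$, so $\rho_s=0$. The second equation then gives $4+22=26$ as well, in line with the remark at the end of Section \ref{sec:spectral}.

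\emph{Step 3: the case $p=11$.} Here $E_4$ is not $11$-integral, but $E_{-6}$ is. Since $\Fix(x)$ is empty, the principal submatrix is empty and $\rho_s=0$, so the second equation gives $a_1(x)=22$. For $1\le i\le 10$ the power $x^i$ again has order $11$ and the same (empty) fixed set. Proposition \ref{prop:srg-rank} therefore applies verbatim to $x^i$ and yields $a_1(x^i)=22$.
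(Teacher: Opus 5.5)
Your proposal is correct and follows the paper's proof. The paper likewise takes from Makhnev--Nosov that a fixed subgraph of an odd-prime-order automorphism has $0,1,2,4,16$ or $56$ vertices, and combines this with $a_0(x)\equiv 352\pmod p$ to get $\Fix(x)\cong K_1$ at $p=13$ and $\Fix(x)=\varnothing$ at $p=11$. It then reads off $a_1$ from the rank equations as you do; at $p=13$ it leads with $E_{-6}$ and notes that $E_4$ gives the same value, the reverse of your order.

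The only slip is in your fallback for $p=11$, which is not needed. The case $k'=4$ has nothing to do with $K_{4,4}$ or $\mathrm{srg}(16,5,0,2)$. It also gives $v'=11\equiv 0\pmod{11}$, so the congruence check does not exclude it. It is excluded instead because an $\mathrm{srg}(v',k',0,2)$ has eigenvalues $-1\pm\sqrt{k'-1}$, so $k'-1$ must be a square; this also rules out $k'=15$.
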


\begin{proof}
By \cite[\S2]{MakhnevNosov2004} the fixed subgraph of an automorphism of odd prime order has order $0$, $1$, $2$, $4$, $16$, or $56$,
and $a_0(x)\equiv 352\pmod p$;
for $p=13$ this forces $a_0(x)=1$, and for $p=11$ it forces $a_0(x)=0$.

For $p=13$, multiplying $E_{-6}$ by $160$ gives $64I-16A+J$, whose single diagonal entry on $\Fix(x)$ is $65\equiv 0\pmod{13}$;
so $\rho_s=0$ and the second equation gives $a_1(x)=26$.
(The first equation, with the entry $65$ of $66I+11A-J$, gives the same value.)

For $p=11$ only $E_{-6}$ is available; the principal submatrix is empty, $\rho_s=0$,
and the second equation gives $a_1(x)=22$.
Each power $x^i$, $1\le i\le 10$, is again an automorphism of order $11$, so the same value applies to it.
\end{proof}

\begin{remark}\label{rem:352-errata}
Proposition \ref{prop:352} corrects the analysis of \cite{MakhnevNosov2004} in two places.
Both concern the integrality of the character value of \cite[Lemma 2.1]{MakhnevNosov2004},
which in our notation is $\tr(xE_{-6})=\bigl(4a_0(x)-a_1(x)+22\bigr)/10$.

(i) Lemma 2.3(1) of \cite{MakhnevNosov2004} asserts that $a_1(x^i)=52$ for an automorphism $x$ of order $13$.
This value is impossible: at $a_0(x)=1$ and $a_1(x)=52$ the character value is not an integer.
The proof there evaluates it as $(-13w+22)/10$ for $a_1(x)=13w$,
omitting the term $4a_0(x)=4$;
the correct integrality condition is $w\equiv 2\pmod{10}$,
satisfied by the actual value $w=2$ of Proposition \ref{prop:352}(1) and not by $w=4$.

(ii) Lemma 2.2 of \cite{MakhnevNosov2004} excludes automorphisms of order $11$ altogether.
Its proof derives, from the integrality at $a_0(x)=0$ and $a_1(x)=11w$, the congruence $w\equiv 8\pmod{10}$,
hence $a_1(x^i)\ge 88$ for each $i$ and a count of at least $40$ hendecagonal orbits among the $32$ orbits --- a contradiction.
But the correct integrality condition is $w\equiv 2\pmod{10}$,
with minimal value $w=2$;
by Proposition \ref{prop:352}(2) the actual value is $w=2$ for every power,
so exactly $10$ of the $32$ orbits are hendecagonal and the counting argument yields no contradiction.
Automorphisms of order $11$ therefore cannot be excluded by this route.
Consequently, the fixed-point-free case at $p=11$ must be restored in Theorem 2 of \cite{MakhnevNosov2004},
and the divisibility bound for the order of the automorphism group derived from it enlarged from $2^l\cdot5^2\cdot13$ to $2^l\cdot5^2\cdot11\cdot13$.
\end{remark}

The identity also confirms, independently, the exclusion of order $7$ in \cite[Lemma 2.5]{MakhnevNosov2004}:
the counting above allows $a_0(x)\in\{2,16\}$
--- a two-vertex clique, impossible since $x$ would act freely on the $25$ remaining neighbours of a fixed vertex, or the Clebsch graph \cite[\S2]{MakhnevNosov2004} ---
and in the Clebsch case Lemma \ref{lem:clebsch-rank} gives $\rho_s=10$,
so the second equation returns the negative count $a_1(x)=-14$.

For $(1276,50,0,2)$ the eigenvalues are $r=6$ and $s=-8$, with
\[
 E_{6}=\tfrac47I+\tfrac1{14}A-\tfrac1{308}J,
 \qquad
 E_{-8}=\tfrac37I-\tfrac1{14}A+\tfrac1{406}J,
\]
available at $p\notin\{2,7,11\}$ and $p\notin\{2,7,29\}$ respectively, and \eqref{eq:rank-eqs} reads
\[
 a_1(x)=14\rho_r-8a_0(x)+58,
 \qquad
 a_1(x)=6a_0(x)+44-14\rho_s.
\]
Nosov determined the fixed subgraphs of automorphisms of odd prime order \cite{Nosov2016};
at $p=7$ both idempotents are excluded and the identity is silent,
and at every other odd prime it gives the exact displacement count.

\begin{proposition}\label{prop:1276}
Suppose a strongly regular graph $\Delta$ with parameters $(1276,50,0,2)$ exists,
and let $x\in\Aut(\Delta)$ have odd prime order $p\neq 7$.
Then one of the following holds:
\begin{enumerate}
\item[(1)] $p=3$, $\Fix(x)$ is the Clebsch graph, and $a_1(x)=0$;
\item[(2)] $p=5$, and either $\Fix(x)\cong K_1$ and $a_1(x)=50$, or $\Fix(x)$ is the Clebsch graph and $a_1(x)=70$;
\item[(3)] $p=11$, $\Fix(x)=\varnothing$, and $a_1(x)=44$;
\item[(4)] $p=29$, $\Fix(x)=\varnothing$, and $a_1(x)=58$.
\end{enumerate}
\end{proposition}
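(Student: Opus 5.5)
The plan is the same as for Propositions \ref{prop:162-5} and \ref{prop:352}. First I take from Nosov's fixed-point analysis \cite{Nosov2016} the list of possible pairs $(p,\Fix(x))$ for odd primes $p\neq 7$. Then I evaluate $\rho_r$ or $\rho_s$ on each fixed subgraph and read $a_1(x)$ off the rank equations \eqref{eq:rank-eqs}, specialized to $(1276,50,0,2)$:
\[
 a_1(x)=14\rho_r-8a_0(x)+58,\qquad a_1(x)=6a_0(x)+44-14\rho_s .
\]
For the rank computations I use the integral multiples $308E_6=176I+22A-J$ and $406E_{-8}=174I-29A+J$. The first may be used for $p\notin\{2,7,11\}$, the second for $p\notin\{2,7,29\}$.

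\emph{Step 1: the case list.} From \cite{Nosov2016}, together with the congruence $a_0(x)\equiv 1276\pmod p$ (all non-trivial orbits have size $p$), I expect to extract that the only surviving pairs are: the Clebsch graph at $p=3$; $K_1$ or the Clebsch graph at $p=5$; the empty graph at $p=11$ and at $p=29$. If Nosov's list contains further branches, I treat each one with the same computation. For a branch that does not appear in the statement, the goal is to show the rank equation forces a negative $a_1(x)$, or a value incompatible with $a_0+a_1\le 1276$. This mirrors the elimination of the Clebsch branch at order $7$ for $(352,26,0,2)$.

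\emph{Step 2: the rank computations.} I use $E_{-8}$ wherever it is available, that is, for $p=3,5,11$.
\begin{itemize}
\item For the Clebsch graph ($a_0=16$), Lemma \ref{lem:clebsch-rank} applied with $(\alpha,\beta,\gamma)=(174,-29,1)$ gives the eigenvalues $\alpha+5\beta+16\gamma=45$, $\alpha+\beta=145$ and $\alpha-3\beta=261$.
 \begin{itemize}
 \item Modulo $3$, only $145$ is a unit, with multiplicity $10$. So $\rho_s=10$ and $a_1=96+44-140=0$.
 \item Modulo $5$, only $261$ is a unit, with multiplicity $5$. So $\rho_s=5$ and $a_1=70$.
 \end{itemize}
\item For $K_1$ at $p=5$, the single entry is $175\equiv0$. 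So $\rho_s=0$ and $a_1=50$.
\item For $p=11$, the fixed set is empty. So $\rho_s=0$ and $a_1=44$.
\item For $p=29$, $E_{-8}$ is not $29$-integral and I switch to $E_6$. The fixed set is empty, so $\rho_r=0$ and $a_1=58$.
\end{itemize}

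\emph{Main obstacle.} The arithmetic is routine. The delicate point is Step 1: reading off exactly which fixed-point alternatives Nosov's theorem leaves for each odd prime $p\neq7$, including how it excludes primes other than $3$, $5$, $11$ and $29$. As Remark \ref{rem:352-errata} shows, such analyses can contain slips. So I would recheck each cited exclusion against the integrality of the character values \eqref{eq:rank-eqs}, rather than quoting it blindly. I would also confirm that no additional fixed-subgraph type survives at $p=3$ or $p=5$.
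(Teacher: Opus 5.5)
Your proposal is correct and takes the paper's approach: the fixed-point list is quoted from \cite{Nosov2016}, and $a_1(x)$ is read off the rank equations. Your arithmetic for $406E_{-8}=174I-29A+J$ is right, namely eigenvalues $45,145,261$ on the Clebsch graph, $\rho_s=10$ modulo $3$, $\rho_s=5$ modulo $5$, and the entry $175$ for $K_1$. The only difference is that at $p=3,5$ you use $E_{-8}$ where the paper uses $308E_6$ (which gives $\rho_r=5$ and $\rho_r=10$). This is immaterial: since $p\nmid 1276$, the two ranks are complementary, $\rho_r+\rho_s=a_0(x)-1=15$, so both routes give the same $a_1(x)$.
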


\begin{proof}
The fixed subgraphs in (1)--(4) are those of \cite{Nosov2016}; it remains to compute the ranks.

For the empty fixed subgraph both ranks vanish,
and the equation available --- the second at $p=11$, the first at $p=29$ --- gives $a_1(x)=44$ and $58$ respectively.

For $p=5$ and $\Fix(x)\cong K_1$, the diagonal entry of $308E_6=176I+22A-J$ on the fixed vertex is $175\equiv 0\pmod 5$,
so $\rho_r=0$ and the first equation gives $a_1(x)=50$.

When $\Fix(x)$ is the Clebsch graph,
the matrix $308E_6=176I+22A_{\mathcal F}-J_{\mathcal F}$ has eigenvalues $270$, $198$, $110$ of multiplicities $1$, $10$, $5$ by Lemma \ref{lem:clebsch-rank}.
Modulo $5$ only $198$ is a unit, so $\rho_r=10$ and the first equation gives $a_1(x)=70$;
modulo $3$ only $110$ is a unit, so $\rho_r=5$ and the first equation gives $a_1(x)=0$.
\end{proof}

Parts (2) and (4) sharpen the congruences of \cite{Nosov2016} to exact values;
in the Clebsch case of part (2) the value $a_1(x)=0$ allowed by the congruence is excluded,
so some pentagonal orbit must occur.
We note a misprint in the one-vertex case of \cite[Lemma 2.2]{Nosov2016}:
its proof establishes $a_1(x)\equiv 50\pmod{70}$,
but the printed statement gives the pentagonal-orbit count as $2(7t-5)$ in place of $2(7t+5)$;
the exact value $50$ satisfies the congruence actually proved.
Parts (1) and (3) recover known values:
$a_1(x)=0$ at $p=3$ is elementary in a triangle-free graph, and $a_1(x)=44$ is the value of \cite[Lemma 2.4]{Nosov2016}.

\subsection{\texorpdfstring{The putative Conway $99$-graph}{The putative Conway 99-graph}}\label{subsec:conway}

A strongly regular graph with parameters $(99,14,1,2)$ is known as a Conway $99$-graph;
its existence is the subject of one of Conway's thousand-dollar prize problems (see \cite{CesarzWoldar}).
Makhnev--Minakova determined the fixed subgraphs of automorphisms of prime order \cite{MakhnevMinakova},
and Behbahani--Lam subsequently reduced the possible prime orders to $2$ and $3$ by an orbit-matrix computation \cite{BehbahaniLam}.
The eigenvalues are $r=3$ and $s=-4$, with
\[
 E_{3}=\tfrac47I+\tfrac17A-\tfrac2{77}J,
 \qquad
 E_{-4}=\tfrac37I-\tfrac17A+\tfrac1{63}J,
\]
available at $p\notin\{7,11\}$ and $p\notin\{3,7\}$ respectively, and \eqref{eq:rank-eqs} reads
\[
 a_1(x)=7\rho_r-4a_0(x)+18,
 \qquad
 a_1(x)=3a_0(x)+11-7\rho_s.
\]
At $p=7$ both idempotents are excluded and the identity is silent;
at the two prime orders that survive the computation of Behbahani--Lam it gives the following.

\begin{proposition}\label{prop:conway}
Suppose a Conway $99$-graph $\Delta$ exists, and let $x\in\Aut(\Delta)$ have order $3$.
Then either $\Fix(x)=\varnothing$ and $a_1(x)=18$, or $\Fix(x)\cong K_3$ and $a_1(x)=6$.
\end{proposition}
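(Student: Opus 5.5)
The plan is to apply the first rank equation in \eqref{eq:rank-eqs} at $p=3$, after fixing the shape of $\Fix(x)$ from the known fixed-point analysis. At $p=3$ only $E_3$ is available: its coefficients have denominators dividing $77$, which are prime to $3$. By contrast $E_{-4}$ has the denominator $63$, which is divisible by $3$, so the second equation cannot be used. The relevant identity is therefore
\[
 a_1(x)=7\rho_r-4a_0(x)+18,
\]
where $\rho_r$ is the rank over $\mathbb F_3$ of the reduction of $E_3[\mathcal F]$. By Remark \ref{rem:integral}, I may compute this rank from the integer matrix $77E_3=44I+11A-2J$, since $77$ is a unit in $\mathbb Z_3$.

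The first step is to invoke the fixed-point theorem of Makhnev--Minakova \cite{MakhnevMinakova} for automorphisms of order $3$. I expect it to state that $\Fix(x)$ is either empty or a triangle $K_3$. This is consistent with the orbit count $a_0(x)\equiv 99\equiv 0\pmod 3$.

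If their list contains further alternatives, these must be removed by the rank equation itself. Such an alternative would have to force $7\rho_r-4a_0(x)+18<0$, or would have to clash with some elementary count. Extracting exactly the right list from the cited analysis, and disposing of any extra case, is the step I expect to be the main obstacle. Once the list is reduced to $\varnothing$ and $K_3$, everything else is mechanical.

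The remaining steps compute $\rho_r$ in the two cases.
\begin{itemize}
\item If $\Fix(x)=\varnothing$, the principal submatrix is empty, so $\rho_r=0$ and $a_0(x)=0$. The equation then gives $a_1(x)=18$.
\item If $\Fix(x)\cong K_3$, then $a_0(x)=3$. Reducing $44I+11A-2J$ modulo $3$ gives $2I+2A+J$. On the triangle its diagonal entries are $2+1\equiv 0$ and its off-diagonal entries are $2+1\equiv 0$. Hence the matrix vanishes modulo $3$, so $\rho_r=0$, and the equation gives $a_1(x)=-12+18=6$.
\end{itemize}

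Finally, I would note that both values satisfy the classical congruences quoted in Table \ref{tab:sharpening}: $18$ is in the list $18,39,60,81$, and $6$ is in the list $6,27,48$. This agrees with Remark \ref{rem:benson}.
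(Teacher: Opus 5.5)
Your proposal is correct and matches the paper's proof step for step. The paper likewise takes the dichotomy $\Fix(x)\in\{\varnothing,K_3\}$ directly from Makhnev--Minakova, so there are no further cases to dispose of. It then uses only $E_3$ at $p=3$ and finds the principal submatrix of $77E_3=44I+11A-2J$ on the triangle to be $33I_3+9J_3\equiv 0\pmod 3$. This gives $\rho_r=0$, hence $a_1(x)=18$ in the empty case and $a_1(x)=6$ in the triangle case.
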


\begin{proof}
The two cases for $\Fix(x)$ are those of \cite{MakhnevMinakova}; only $E_3$ is available at $p=3$.
In the empty case $\rho_r=0$ and the first equation gives $a_1(x)=18$.
In the triangle case $a_0(x)=3$ and $A_{\mathcal F}=J_3-I_3$,
so the principal submatrix of $77E_3=44I+11A-2J$ is $33I_3+9J_3\equiv 0\pmod 3$;
hence $\rho_r=0$ and the first equation gives $a_1(x)=6$.
\end{proof}

At the remaining orders the identity recovers, in one line each,
the known exact values $a_1(t)=14$ for an involution and $a_1(x)=11$ at order $11$ \cite{MakhnevMinakova}.

\subsection{\texorpdfstring{The parameter set $(640,243,66,108)$}{The parameter set (640,243,66,108)}}\label{subsec:640}

Makhnev--Nirova determined the possible fixed subgraphs of automorphisms of prime order of a hypothetical strongly regular graph with parameters $(640,243,66,108)$ \cite{MakhnevNirova}.
In the fixed-point-free case their theorem leaves $p\in\{2,5\}$.
The eigenvalues are $r=3$ and $s=-45$, with
\[
 E_{3}=\tfrac{15}{16}I+\tfrac1{48}A-\tfrac3{320}J,
 \qquad
 E_{-45}=\tfrac1{16}I-\tfrac1{48}A+\tfrac1{128}J,
\]
excluded at $p\in\{2,3,5\}$ and $p\in\{2,3\}$ respectively:
$p=5$ is the only prime at which the identity applies to this parameter set at all,
and there it decides precisely the branch left open.
The second rank equation reads $a_1(x)=3a_0(x)+240-48\rho_s$.

\begin{proposition}\label{prop:640}
Suppose a strongly regular graph $\Delta$ with parameters $(640,243,66,108)$ exists,
and let $x\in\Aut(\Delta)$ have order $5$ with $\Fix(x)=\varnothing$.
Then $a_1(x)=240$.
\end{proposition}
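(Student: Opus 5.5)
We follow the pattern of Section \ref{subsec:rank-eqs}. At $p=5$ only $E_{-45}$ is $5$-integral: its coefficients $\tfrac1{16}$, $-\tfrac1{48}$, $\tfrac1{128}$ have denominators $2^4$, $2^4\cdot 3$, $2^7$, all prime to $5$. The idempotent $E_3$ is excluded because of the factor $5$ in $320$. So Proposition \ref{prop:srg-rank} applies to $E_{-45}$ with $K=\mathbb Q_5$ and $O=\mathbb Z_5$, as in Remark \ref{rem:integral}, and gives
\[
 \tfrac1{16}a_0(x)-\tfrac1{48}a_1(x)+\tfrac{640}{128}=\rho_s .
\]
Clearing denominators by multiplying by $48$ yields the second rank equation $a_1(x)=3a_0(x)+240-48\rho_s$ displayed just before the statement. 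As a check, this agrees with \eqref{eq:rank-eqs}: with $r=3$, $s=-45$, $k=243$ we get $r\,a_0(x)+(k-r)-(r-s)\rho_s=3a_0(x)+240-48\rho_s$.

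Under the hypothesis $\Fix(x)=\varnothing$ we have $a_0(x)=0$. The principal submatrix $E_{-45}[\mathcal F]$ is then the empty matrix, so $\rho_s=0$. Equivalently, $M(C_5)=0$ by \eqref{eq:MC-basis}, so $L_{-45}(C_5)=0$. Substituting gives $a_1(x)=240$ at once. Note that the only nontrivial input is the $5$-integrality of $E_{-45}$; the hypothesis that $x$ has order exactly $5$ ensures that the residue characteristic matches the order, as Corollary \ref{cor:graph} requires.

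The computation is essentially immediate, so the only step needing care is confirming that $E_{-45}$ is correct and $5$-integral. I would rederive it from \eqref{eq:Ers} with $\theta=-45$, $\theta'=3$, $v=640$, $k=243$:
\[
 E_{-45}=\frac{A-3I-\tfrac{240}{640}J}{-48}=\tfrac1{16}I-\tfrac1{48}A+\tfrac{1}{128}J,
\]
which matches the displayed form. To place the result in context, I would note that $a_1(x)=240$ is the smaller of the values $240$ and $480$ allowed by the character congruence in \cite{MakhnevNirova}, as recorded in Table \ref{tab:sharpening}.
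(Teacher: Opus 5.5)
Your proposal is correct and follows the paper's proof: since $E_{-45}$ is $5$-integral, the second rank equation $a_1(x)=3a_0(x)+240-48\rho_s$ applies at $p=5$. The empty fixed set gives $a_0(x)=\rho_s=0$, hence $a_1(x)=240$, and your rederivation of $E_{-45}$ and your consistency check against \eqref{eq:rank-eqs} are both accurate.
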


\begin{proof}
The principal submatrix is empty, so $\rho_s=0$, and the equation above gives $a_1(x)=240$.
\end{proof}

\end{document}